\documentclass[]{article}
\bibliographystyle{plain}
\usepackage{amssymb,amsmath,amsthm}
\usepackage{color}

\newcommand{\R}{\mathbf{R}}

\newcommand {\E}{\mathrm{E}}

\renewcommand{\d}{\text{\rm d}}

\newcommand{\sG}{\mathcal{G}}

\newcommand{\sA}{\mathcal{A}}
\newcommand{\sE}{\mathcal{E}}

\newtheorem{stat}{Statement}[section]
\newtheorem{proposition}[stat]{Proposition}
\newtheorem{corollary}[stat]{Corollary}
\newtheorem{theorem}[stat]{Theorem}
\newtheorem{lemma}[stat]{Lemma}
\newtheorem{assumption}[stat]{Assumption}
\theoremstyle{definition}
\newtheorem{definition}[stat]{Definition}\newtheorem{remark}[stat]{Remark}

\numberwithin{equation}{section}

\numberwithin{equation}{section}

\newcommand{\RR}[1]{\mathbb{#1}}

\newcommand{\rd}{{\mathbb R^d}}

\def\R{{\mathbb R}}

\def\E{{\mathbb E}}

\allowdisplaybreaks

\begin{document}

\title{Non-linear noise excitation for some space-time fractional stochastic equations in bounded domains}

\author{Mohammud Foondun\\
Loughborough University\\
and \\
Jebessa B. Mijena\\
Georgia College and State University\\
 and \\
   Erkan Nane\\
 Auburn University}

\maketitle


\begin{abstract}
In this paper we study non-linear noise excitation for the following class of space-time fractional stochastic equations in  bounded domains:
$$\partial^\beta_tu_t(x)=-\nu(-\Delta)^{\alpha/2} u_t(x)+I^{1-\beta}_t[\lambda \sigma(u)\stackrel{\cdot}{F}(t,x)]$$ in $(d+1)$ dimensions, where $\nu>0, \beta\in (0,1)$, $\alpha\in (0,2]$. The operator  $\partial^\beta_t$ is the Caputo fractional derivative,  $-(-\Delta)^{\alpha/2} $ is the generator of an isotropic stable process and $I^{1-\beta}_t$ is the fractional integral operator. The forcing noise denoted by $\stackrel{\cdot}{F}(t,x)$ is a  Gaussian noise. The multiplicative non-linearity $\sigma:\RR{R}\to\RR{R}$ is assumed to be globally Lipschitz continuous. These equations  were recently  introduced by Mijena and Nane  \cite{nane-mijena-2014}.  We first study  the existence and uniqueness of the solution of these equations {and} under suitable conditions on the initial function, we {also} study the asymptotic behavior of the solution with respect to the parameter $\lambda$. In particular, our results are significant extensions of those  in \cite{foondun-tian-liu-2015}, \cite{foondun-khoshnevisan-09}, \cite{nane-mijena-2014}, and \cite{mijena-nane-2}.
\end{abstract}

Keywords: Space-time-fractional stochastic partial differential equations; space-time fractional diffusion in bounded domain;  fractional Duhamel's principle; Caputo derivatives; noise excitability.

\section{Introduction and  statement of the main results.}

{While fractional calculus has existed in the theoretical realm of mathematics as long as its classical counterpart, the pragmatic applications of said branch of calculus were sparse until the last century, when a rather large number of scientific branches, such as statistical mechanics, theoretical physics,  theoretical neuroscience, theory of complex chemical reactions, fluid dynamics, hydrology, and mathematical finance began applying fractional differential equations to problems in said fields; see, for example, Khoshnevisan \cite{khoshnevisan-cbms} for an extensive list of references. One such application is, the fractional heat equation $\partial^\beta_tu_t(x)=\Delta u_t(x)$, which describes heat propagation in inhomogeneous media whereas the integer counterpart, the classical heat equation $\partial_tu_t(x)=\Delta u_t(x)$, is used for modeling heat diffusion in homogeneous media. It is well known that when $0<\beta<1$,  time fractional equations are  known to exhibit sub diffusive behavior and are related with anomalous diffusions, or diffusions in non-homogeneous media, with random fractal structures; see, for instance, \cite{meerschaert-nane-xiao}. Most of the work done so far} on the stochastic heat equations have dealt with the usual time derivative, that is $\beta=1$. But recently, Mijena and Nane have introduced time fractional SPDEs in \cite{nane-mijena-2014}. These types of time fractional stochastic  equations are  attractive models that can be used to model phenomenon with random effects with thermal memory.  {They also studied exponential growth of solutions of time fractional SPDEs--intermittency-- under the assumption that the initial function is bounded from below in \cite{mijena-nane-2}}. In the paper \cite{FooNane} Foondun and Nane have proved asymptotics of the second moment of the solution under various assumptions on the initial function.
{In addition,} a related  class of  time-fractional  SPDE was studied by  Karczewska \cite{karczewska}, Chen  et al. \cite{chen-kim-kim-2014}, and Baeumer et al \cite{baeumer-Geissert-Kovacs}. { In these papers they} proved regularity of the solutions to the time-fractional parabolic type SPDEs using cylindrical Brownain motion in Banach spaces  in the sense of  \cite{daPrato-Zabczyk}. For a comparison of the two approaches to SPDE's see the paper by Dalang and Quer-Sardanyons \cite{Dalang-Quer-Sardanyons}.
 {The current paper  is mainly  about} a class of space-time fractional  stochastic  heat equations in bounded domains with Dirichlet boundary conditions.

{ Next we provide some heuristics before describing our equations. Fix $R>0$. First let us look at }the following space-time  fractional equation with Dirichlet boundary conditions (see  \cite{cmn-12} and  \cite{mnv-09}  for a representation of the solution),
\begin{equation}\label{tfpde-dirichlet}
\begin{aligned}
\partial^\beta_tu_t(x)&=-\nu(-\Delta)^{\alpha/2} u_t(x), \ \  x\in B(0,R), t>0,\\
u_t(x)&=0,\ \ \ x\in B(0,R)^c,
\end{aligned}
\end{equation}
with $\beta\in (0,1)$  and $\partial^\beta_t$ is the Caputo fractional derivative which first appeared in \cite{Caputo} and is defined by
\begin{equation}\label{CaputoDef}
\partial^\beta_t u_t(x)=\frac{1}{\Gamma(1-\beta)}\int_0^t \partial_r
u_r(x)\frac{\d r}{(t-r)^\beta} .
\end{equation}
If $u_0(x)$ denotes the initial condition to the above equation, then the solution can be written as
\begin{equation*}
u_t(x)=\int_{B(0,R)} G_B(t,x,y)u_0(y)\d y.
\end{equation*}
{$G_B(t,x,y)$} is the space-time fractional heat kernel. { Now consider}
 \begin{equation}\label{tfpde}
 \partial^\beta_tu_t(x)=-\nu(-\Delta)^{\alpha/2}u_t(x)+f(t,x),
 \end{equation}
with the same initial condition $u_0(x)$ and $f(t,x)$ is some nice function. {To get the correct version of \eqref{tfpde} we will make use of {\bf time fractional Duhamel's principle} \cite{umarov-06, umarov-12,Umarov-saydamatov}.
Applying} the fractional Duhamel principle, the solution to \eqref{tfpde} is given by
$$u_t(x)=\int_{B(0,R)} G_B({t},x,y)u_0(y)\d y+\int_0^t\int_{B(0,R)} G_B({t-r},x,y)\partial^{1-\beta}_r f(r,y)\d y\d r.$$

{Using the definition of the fractional integral
$$I^{\gamma}_tf(t):=\frac{1}{\Gamma(\gamma)} \int _0^t(t-\tau)^{\gamma-1}f(\tau)\d\tau,$$
and the property
$$ \partial _t^\beta I^\beta_t g(t)=g(t),$$
for every  $\beta\in (0,1)$, and   $g\in L^\infty(\R_+)$ or $g\in C(\R_+)$, then by the Duhamel's principle, the  mild solution to
\eqref{tfpde} where the force is $f(t,x)=I^{1-\beta}_tg(t,x)$, will be given by}
$$u_t(x)=\int_{B(0,R)} G_B(t,x,y)u_0(y)\d y+\int_0^t\int_{B(0,R)} G_B({t-r},x,y) g(r,y)\d y\d r.$$
{ For more informations on these see \cite{chen-kim-kim-2014} and \cite{nane-mijena-2014}}.

  {Our first equation in this paper is the following.}
\begin{equation}\label{tfspde}
\begin{split}
\partial^\beta_t u_t(x)&=-\mathcal{L} u_t(x)+I^{1-\beta}_t[\lambda\sigma(u_t(x))\stackrel{\cdot}{W}(t,x)],\, x\in B(0,R),\\
u_t(x)&=0 \ \ \ x\in B(0,R)^c,\\
 \end{split}
 \end{equation}
where $\mathcal{L}$ is the generator of an $\alpha$-stable process killed upon exiting $B(0,R)$, the initial datum $u_0$ is a non-random  nonnegative measurable function $u_0: B(0,R)\to\R_+$ which is strictly positive in a set of positive measure in $B(0,R)$. $\stackrel{\cdot}{W}(t,x)$ is a  space-time white noise with $x\in \R^d$ and $\sigma:\R\to\R$ is a globally Lipschitz function with $\sigma(0)=0$. $\lambda$ is a positive parameter called the ``level of noise". 

{We will use an idea in Walsh \cite{walsh} to make sense of the above equation.
 Using the above argument,} a solution $u_t$ to the above equation will in fact be a solution to the following integral equation.
\begin{equation}\label{mild-sol-white}
u_t(x)=(\mathcal{G}_B u_0)_t(x)+\lambda\int_0^t\int_{B(0,R)}G_B(t-s, x,y)\sigma(u_s(y))W(\d y\,\d s),
\end{equation}
where
$$
(\mathcal{G}_Bu_0)_t(x):=\int_{B(0,R)} G_B(t,x,y)u_0(y)\d y.
$$
Here $G_B(t,x,y)$ denotes the heat kernel of the space-time fractional diffusion equation with Dirichlet boundary conditions in \eqref{tfpde-dirichlet}. {In this paper $\alpha$ and $\beta$ are fixed}. We will restrict $\beta\in (0,\,1)$ and $\alpha\in (0,2]$. { The relation between the dimension $d$ and the parameters $\alpha$ and $\beta$ is given by}
\begin{equation*}
d<(2\wedge \beta^{-1})\alpha.
\end{equation*}
{Observe} that when $\beta=1$, the equation reduces to the well known stochastic heat equation and the above inequality restrict the problem to a one-dimensional one. This is the so called curse of dimensionality explored in \cite{foondun-khoshnevisan-Nualart-11}. We will need $d< 2\alpha$ to get a finite $L^2-$norm of the heat kernel, while $d<\beta^{-1}\alpha$ is needed for an integrability condition needed for ensuring existence and uniqueness of the solution. { We will require the following notion of "random-field" solution.}
\begin{definition}
A random field $\{u_t(x), \ t\geq 0, x\in  B(0,R)\}$ is called a mild solution of \eqref{tfspde} if
\begin{enumerate}
\item $u_t(x)$ is jointly measurable in $t\geq 0$ and $x\in  B(0,R)$;
\item $\forall (t,x)\in [0,\infty)\times \rd$, $\int_0^t\int_{\R^d}G_B(t-s, x,y)\sigma(u_s(y))W(\d y\,\d s)$ is well-defined in $L^2(\Omega)$; by the Walsh-Dalang isometry this is the same as requiring $$
    \sup_{x\in \rd}\sup_{0<t\leq T}\E|u_t(x)|^2<\infty\quad\text{for all}\quad T<\infty.
    $$
\item The following holds in $L^2(\Omega)$,
$$
u_t(x)=(\mathcal{G}_Bu_0)_t(x)+\lambda\int_0^t\int_{\R^d}G_B({t-s},x,y)\sigma(u_s(y))W(\d y\,\d s).
$$
\end{enumerate}
\end{definition}

{Before stating our main results, we will mention all the assumptions we need}.  The first assumption is required for the existence-uniqueness result as well as the upper bound on the second moment of the solution.

\begin{assumption}\label{existence-uniqueness}
\begin{itemize}
\item We assume that initial condition is a non-random bounded non-negative function $u_0:\R^d\rightarrow \R$.
\item We assume that $\sigma:\R\rightarrow \R$ is a globally Lipschitz function satisfying $\sigma(x)\leq L_\sigma|x|$ with $L_\sigma$ being a positive number.
\end{itemize}
\end{assumption}

The following assumption is needed for lower bound on the second moment.

\begin{assumption}\label{lowerbound}
\begin{itemize}
\item We will assume that the initial function $u_0$ is non-negative on a set of positive measure.
\item The function $\sigma$ satisfies $\sigma(x)\geq l_\sigma|x|$ with $l_\sigma$ being a positive number.
\end{itemize}
\end{assumption}

Our first  theorem extends the result of Mijena and Nane  \cite[Theorem 2]{nane-mijena-2014} for the equation \eqref{tfspde} in $\rd$ to  the equation with Dirichlet  boundary conditions in bounded domains.
\begin{theorem}\label{white:upperbound}
Suppose that $d<(2\wedge \beta^{-1})\alpha$.  Then under Assumption \ref{existence-uniqueness}, there exists a unique random-field solution to \eqref{tfspde} satisfying
\begin{equation*}
\sup_{x\in B(0,R)}\E|u_t(x)|^2\leq c_1e^{c_2\lambda^{\frac{2\alpha}{\alpha-d\beta}}t}\quad \text{for\,all}\quad t>0.
\end{equation*}
Here $c_1$ and $c_2$ are positive constants.
\end{theorem}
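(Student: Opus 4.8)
The plan is to construct the solution by a Picard iteration in the Walsh--Dalang framework and to read off both uniqueness and the second-moment bound from a single renewal-type inequality governed by the spatial $L^2$-mass of the kernel. First I would define the operator
$$(\mathcal{A}v)_t(x):=(\mathcal{G}_Bu_0)_t(x)+\lambda\int_0^t\int_{B(0,R)}G_B(t-s,x,y)\sigma(v_s(y))\,W(\d y\,\d s),$$
iterating from $u^{(0)}_t(x):=(\mathcal{G}_Bu_0)_t(x)$. Since $u_0$ is bounded and $G_B$ is a sub-probability kernel, $(\mathcal{G}_Bu_0)_t(x)\le\|u_0\|_\infty$. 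The Walsh--Dalang isometry turns the $L^2(\Omega)$-norm of the stochastic integral into $\lambda^2\int_0^t\int_{B(0,R)}G_B(t-s,x,y)^2\,\E|\sigma(v_s(y))|^2\,\d y\,\d s$, and the bound $\sigma(z)\le L_\sigma|z|$ from Assumption \ref{existence-uniqueness} replaces $\E|\sigma(v_s(y))|^2$ by $L_\sigma^2\,\E|v_s(y)|^2$. Everything therefore hinges on controlling the quantity $\mathcal{K}(s):=\sup_{x\in B(0,R)}\int_{B(0,R)}G_B(s,x,y)^2\,\d y$.

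The key kernel estimate is the heart of the argument. Because the killed kernel is dominated by the free space-time fractional kernel (domain monotonicity), $G_B(s,x,y)\le G(s,x,y)$, so it suffices to bound $\int_{\R^d}G(s,x,y)^2\,\d y$. By Plancherel this equals a constant times $\int_{\R^d}E_\beta(-\nu s^\beta|\xi|^\alpha)^2\,\d\xi$, where $E_\beta$ is the Mittag-Leffler function; the scaling $\xi\mapsto s^{-\beta/\alpha}\xi$ pulls out a factor $s^{-d\beta/\alpha}$, and the remaining integral is finite precisely because $E_\beta(-r)\sim c/r$ forces the integrand to decay like $|\eta|^{-2\alpha}$ at infinity, which is integrable if and only if $d<2\alpha$. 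Hence $\mathcal{K}(s)\le C\,s^{-d\beta/\alpha}$, and the condition $d<\beta^{-1}\alpha$ is exactly what makes $s\mapsto\mathcal{K}(s)$ integrable near $0$; together these two requirements give the standing hypothesis $d<(2\wedge\beta^{-1})\alpha$.

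For existence and uniqueness I would work on the weighted space with norm $\|v\|_\theta^2:=\sup_{t>0}\sup_x e^{-\theta t}\E|v_t(x)|^2$. The isometry, the global Lipschitz property of $\sigma$, and the kernel bound give
$$\|\mathcal{A}v-\mathcal{A}w\|_\theta^2\le \lambda^2 L_\sigma^2\Big(\int_0^\infty e^{-\theta s}\mathcal{K}(s)\,\d s\Big)\|v-w\|_\theta^2.$$
Since $\int_0^\infty e^{-\theta s}\mathcal{K}(s)\,\d s\le C\,\Gamma(1-d\beta/\alpha)\,\theta^{d\beta/\alpha-1}\to0$ as $\theta\to\infty$, one may choose $\theta=\theta(\lambda)$ large enough to make $\mathcal{A}$ a contraction; this yields a unique fixed point, and checking joint measurability together with finiteness of the second moment produces the unique random-field solution in the sense of the Definition.

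Finally, for the moment bound, writing $F(t):=\sup_x\E|u_t(x)|^2$ the preceding steps produce the weakly singular Volterra inequality
$$F(t)\le 2\|u_0\|_\infty^2+2\lambda^2L_\sigma^2\int_0^t\mathcal{K}(t-s)F(s)\,\d s\le c_0+c\lambda^2\int_0^t(t-s)^{-d\beta/\alpha}F(s)\,\d s.$$
The crux, and the step I expect to be the main obstacle, is extracting the sharp dependence on $\lambda$: one must verify that the singular kernel $s^{-d\beta/\alpha}$ produces a clean exponential (not a sub- or super-exponential) rate, and that the power of $\lambda$ is exactly $2\alpha/(\alpha-d\beta)$. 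The fractional Gronwall lemma gives $F(t)\le c_0\,E_{1-d\beta/\alpha}\!\big(c\,\Gamma(1-d\beta/\alpha)\,\lambda^2 t^{\,1-d\beta/\alpha}\big)$, and the Mittag-Leffler asymptotics $E_{1-d\beta/\alpha}(y)\sim\exp\!\big(y^{\alpha/(\alpha-d\beta)}\big)$ make the argument collapse to $\exp\!\big(c_2\lambda^{2\alpha/(\alpha-d\beta)}t\big)$, since $\big(\lambda^2 t^{1-d\beta/\alpha}\big)^{\alpha/(\alpha-d\beta)}=\lambda^{2\alpha/(\alpha-d\beta)}\,t$. Equivalently, this is the renewal-theorem computation of the abscissa $\theta$ solving $c\lambda^2\int_0^\infty e^{-\theta s}s^{-d\beta/\alpha}\,\d s=1$, which gives $\theta\asymp\lambda^{2/(1-d\beta/\alpha)}=\lambda^{2\alpha/(\alpha-d\beta)}$ and matches the claimed exponent.
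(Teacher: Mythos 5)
Your proposal is correct and follows essentially the same route as the paper: the Walsh--Dalang isometry, the domination $G_B(t,x,y)\le G(t,x,y)$ of the killed kernel by the free one, the $L^2$-bound $\int_{\R^d}G_t^2(x)\,\d x=C^\ast t^{-\beta d/\alpha}$ of Lemma \ref{Lem:Green1} (your Plancherel/Mittag-Leffler sketch is exactly its proof, including why $d<2\alpha$ and $d<\beta^{-1}\alpha$ enter), and the resulting renewal inequality with kernel $(t-s)^{-d\beta/\alpha}$, whose solution carries the rate $\kappa^{1/\rho}\asymp\lambda^{2\alpha/(\alpha-d\beta)}$ with $\rho=(\alpha-d\beta)/\alpha$. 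The only cosmetic deviations are that you close the fixed-point step with an exponentially weighted norm where the paper runs a Picard iteration and invokes Walsh's Lemma 3.3, and that you re-derive the exponential bound via the fractional Gronwall/Mittag-Leffler asymptotics (equivalently, the abscissa computation) instead of citing Proposition \ref{prop:renewal-upper}.
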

\begin{remark}
This theorem says that second moment grows at most exponentially. For small $\lambda$, this upper bound is not sharp. But for large $\lambda$, this is sharp.
 Theorem \ref{white:upperbound} implies that a random field solution exists when  $d<(2\wedge \beta^{-1})\alpha$.  It follows from this theorem that space-time fractional stochastic equations with  space-time white noise  is that a random field solution exists in space dimension greater than 1 in some cases, in contrast to the  parabolic stochastic heat type equations, the case $\beta=1$. So in the case $\alpha=2, \beta<1/2$, a random field solution exists when $d=1,2,3$.
 When $\beta=1$ a random field solution exist only in spatial dimension $d=1$.

\end{remark}

\begin{remark}

Suppose that $d<(2\wedge \beta^{-1})\alpha$.
Using similar ideas in the proof of Theorem \ref{white:upperbound}, the results in Theroem \ref{white:upperbound} can be extended to other classes of bounded domains in $\rd$. Let $D$ be a bounded domain that is regular as in \cite{cmn-12}. Consider  the equation
\begin{equation}\label{tfspde-bounded-domain}
\begin{split}
\partial^\beta_t u_t(x)&=-\mathcal{L} u_t(x)+I^{1-\beta}_t[\lambda\sigma(u_t(x))\stackrel{\cdot}{W}(t,x)],\,t>0,  x\in D,\\
u_t(x)&=0 \ \ \ x\in D^C,\\
 \end{split}
 \end{equation}
where $\mathcal{L}$ is the generator of an $\alpha$-stable process killed upon exiting $D$, the initial datum $u_0$ is a non-random  nonnegative measurable function $u_0: D\to\R_+$ which is strictly positive in a set of positive measure in $D$. $\stackrel{\cdot}{W}(t,x)$ is a  space-time white noise with $x\in \R^d$,  and $\sigma:\R\to\R$ is a globally Lipschitz function with $\sigma(0)=0$.
Then under Assumption \ref{existence-uniqueness}, there exists a unique random-field solution to \eqref{tfspde-bounded-domain} satisfying
\begin{equation*}
\sup_{x\in D}\E|u_t(x)|^2\leq c_1e^{c_2\lambda^{\frac{2\alpha}{\alpha-d\beta}}t}\quad \text{for\,all}\quad t>0.
\end{equation*}
Here $c_1$ and $c_2$ are positive constants.

\end{remark}

Our second theorem is the following.
\begin{theorem}\label{main-thm-white-noise}
Fix $\epsilon>0$ and let $x\in B(0, R-\epsilon)$, then for any $t>0$,
\begin{equation*}
\lim_{\lambda\rightarrow \infty} \frac{\log \log \E|u_t(x)|^2}{\log \lambda}=\frac{2\alpha}{\alpha-d\beta},
\end{equation*}
where $u_t$ is the mild solution to \eqref{tfspde}.
\end{theorem}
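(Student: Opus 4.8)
The statement will follow from matching upper and lower bounds on the exponential growth rate of $\E|u_t(x)|^2$ in $\lambda$. The upper half is already in hand: by Theorem \ref{white:upperbound}, $\E|u_t(x)|^2\le c_1\exp\!\big(c_2\lambda^{2\alpha/(\alpha-d\beta)}t\big)$ for every $x\in B(0,R)$. Taking $\log\log$, the constant $\log c_1$ is asymptotically negligible against $c_2\lambda^{2\alpha/(\alpha-d\beta)}t$ as $\lambda\to\infty$, so after dividing by $\log\lambda$ I obtain
\[
\limsup_{\lambda\to\infty}\frac{\log\log\E|u_t(x)|^2}{\log\lambda}\le\frac{2\alpha}{\alpha-d\beta}.
\]
Thus the entire content of the theorem is the reverse inequality, for which I will prove the interior lower bound $\E|u_t(x)|^2\ge c_3\exp\!\big(c_4\lambda^{2\alpha/(\alpha-d\beta)}t\big)$ valid for $x\in B(0,R-\epsilon)$.

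\textbf{The renewal inequality.} Starting from the mild formulation \eqref{mild-sol-white} and applying the Walsh--Dalang isometry together with the lower Lipschitz bound $\sigma(z)\ge l_\sigma|z|$ from Assumption \ref{lowerbound}, I get, with $\theta:=(l_\sigma\lambda)^2$,
\[
\E|u_t(x)|^2\ge\big((\mathcal{G}_Bu_0)_t(x)\big)^2+\theta\int_0^t\!\!\int_{B(0,R)}G_B^2(t-s,x,y)\,\E|u_s(y)|^2\,\d y\,\d s.
\]
I then restrict all spatial integrations to the interior ball and set $M_t:=\inf_{x\in B(0,R-\epsilon)}\E|u_t(x)|^2$, $b(t):=\inf_{x\in B(0,R-\epsilon)}\big((\mathcal{G}_Bu_0)_t(x)\big)^2$, and $\kappa(r):=\inf_{x\in B(0,R-\epsilon)}\int_{B(0,R-\epsilon)}G_B^2(r,x,y)\,\d y$. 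Bounding $\E|u_s(y)|^2\ge M_s$ for $y$ in the interior ball yields the scalar renewal inequality
\[
M_t\ge b(t)+\theta\int_0^t\kappa(t-s)\,M_s\,\d s.
\]
Here $b(t)>0$ for every fixed $t>0$, since $u_0\ge0$ is strictly positive on a set of positive measure (Assumption \ref{lowerbound}) and $G_B(t,x,y)>0$ for interior $x,y$; this furnishes the positive seed for the iteration.

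\textbf{Kernel asymptotics and Mittag--Leffler iteration.} The key analytic input is that, for interior points and small times, $G_B$ is comparable to the free space-time fractional kernel $G$, whose self-similarity $G(r,x,y)=r^{-d\beta/\alpha}\mathcal{G}\!\big(r^{-\beta/\alpha}(x-y)\big)$ gives $\int_{\R^d}G^2(r,x,y)\,\d y=\|\mathcal{G}\|_2^2\,r^{-d\beta/\alpha}$ (finite since $d<2\alpha$, with exponent $d\beta/\alpha<1$ since $d<\beta^{-1}\alpha$). Hence there exist $r_0,c_*>0$ with $\kappa(r)\ge c_*\,r^{-d\beta/\alpha}$ for $0<r\le r_0$. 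Iterating the renewal inequality gives $M_t\ge\sum_{n\ge0}\theta^n(\kappa^{*n}*b)(t)$, and for the $\lambda$-dominant large-$n$ terms I may split $t$ into $n$ increments each below $r_0$, so that the small-$r$ bound applies and the Beta integral produces
\[
\kappa^{*n}(r)\ge c_*^{\,n}\,\frac{\Gamma(1-d\beta/\alpha)^n}{\Gamma\!\big(n(1-d\beta/\alpha)\big)}\,r^{\,n(1-d\beta/\alpha)-1}.
\]
Summing and comparing with the Mittag--Leffler function $E_{a,b}$, $a=1-d\beta/\alpha$, whose asymptotics $E_{a,b}(z)\asymp\exp(z^{1/a})$ hold for large $z$, I conclude $M_t\ge c_3\exp\!\big(c_4\,\lambda^{2/(1-d\beta/\alpha)}t\big)=c_3\exp\!\big(c_4\,\lambda^{2\alpha/(\alpha-d\beta)}t\big)$. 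Taking $\log\log$ and dividing by $\log\lambda$ then gives the matching lower bound $\liminf_{\lambda\to\infty}\log\log\E|u_t(x)|^2/\log\lambda\ge2\alpha/(\alpha-d\beta)$, completing the proof.

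\textbf{Main obstacle.} The delicate step is the interior small-time kernel bound. In $\R^d$ exact self-similarity makes both the $L^2$ estimate and the convolution powers transparent, but in the bounded domain scaling is lost, so I must use the subordination representation $G_B(t,x,y)=\int_0^\infty p_B(s,x,y)\,\d_s\P(E_t\le s)$, where $p_B$ is the heat kernel of the stable process killed on exiting $B(0,R)$, together with the interior heat-kernel estimates of \cite{cmn-12,mnv-09} comparing $p_B$ to the free kernel away from the boundary. These comparisons are valid only for $x$ away from $\partial B(0,R)$ and $r\le r_0$, which is exactly what forces the restriction $x\in B(0,R-\epsilon)$ and confines the Picard iteration to a fixed interior ball; verifying that $\kappa(r)\gtrsim r^{-d\beta/\alpha}$ survives this confinement, and that only the small-$r$ behaviour is needed for the $\lambda$-dominant terms, is the technical heart that yields the exponent $2\alpha/(\alpha-d\beta)$.
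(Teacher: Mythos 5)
Your proposal is correct in substance, and its skeleton coincides with the paper's: the upper half is read off from Theorem \ref{white:upperbound} exactly as you do, and the lower half rests on the same two ingredients --- the Walsh isometry together with $\sigma(z)\ge l_\sigma|z|$ to produce the renewal inequality $\mathcal{I}_{\epsilon,t}(\lambda)\ge g_{t_0}^2+c\,\lambda^2\int_0^t(t-s)^{-\beta d/\alpha}\,\mathcal{I}_{\epsilon,s}(\lambda)\,\d s$, and the interior small-time kernel bound $G_B(t,x,y)\ge C t^{-\beta d/\alpha}$ for $|x-y|<t^{\beta/\alpha}$, which is the paper's Proposition \ref{density-lower-bound-killed-fractional}, proved there by subordination through $f_{E_t}$ and the interior comparison $p_B\ge c_1 p$ for small times --- precisely the mechanism you identify as the technical heart. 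Where you genuinely diverge is the final step. The paper does not iterate the renewal inequality by hand: it invokes Proposition \ref{prop2.6-foondun} (Proposition 2.6 of \cite{foondun-tian-liu-2015}) with $\eta=(\alpha-d\beta)/\alpha$ to convert the inequality into the liminf for $t\le t_0$, and then extends to all $t>0$ by a time-shift argument, splitting the Duhamel integral at a fixed $T$ and dropping part of it to obtain a renewal inequality for $\E|u_{t+T}(x)|^2$ in the variable $t$, seeded by the strict positivity of $|(\sG_B u_0)_{t+T}(x)|^2$ from Remark \ref{lower-bound-frac-int-rep}. Your explicit Picard-iteration/Beta-function/Mittag--Leffler summation is instead the route the paper itself takes for the colored-noise analogue (Proposition \ref{colorednoiselowbd} combined with Lemma \ref{lower-bound-lambda}), so it is a viable, more self-contained alternative that also absorbs the ``all $t>0$'' extension into the series rather than a separate shift.

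Two details in your write-up are too quick, though both are repairable. First, the clean convolution-power identity $\kappa^{*n}(r)\ge c_*^{\,n}\Gamma(a)^n\Gamma(na)^{-1}r^{na-1}$, $a=1-d\beta/\alpha$, holds only when every temporal increment lies below $r_0$; after your restriction to increments of size at most $t/n$ the $n$-th term degrades to the form $(c\lambda^2)^n(t/n)^{na}$, which still yields the liminf via Lemma \ref{lower-bound-lambda} with $\theta=\lambda^2$ and $\rho=a$, but your asserted pointwise bound $M_t\ge c_3\exp\bigl(c_4\lambda^{2\alpha/(\alpha-d\beta)}t\bigr)$ for all $t>0$ is stronger than what this argument delivers (and stronger than the theorem requires). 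Second, $b(s)=\inf_{x\in B(0,R-\epsilon)}\bigl((\sG_B u_0)_s(x)\bigr)^2$ need not be bounded away from zero as $s\downarrow 0$, since $(\sG_B u_0)_s\to u_0$ and $u_0$ is only assumed positive on a set of positive measure; in the $n$-th term of your series you must therefore keep the last time variable bounded away from $0$, e.g.\ by restricting the increments to total at most $t/2$ and using $\inf_{s\in[t/2,t]}b(s)>0$ --- this uniform positivity is exactly what the constants $h_t$ and $g_t$ in Remark \ref{lower-bound-frac-int-rep}, with their built-in shift by $t_0$, are engineered to supply. With these two repairs your route closes and matches the paper's conclusion.
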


Set
\begin{equation*}
\sE_t(\lambda):=\sqrt{\int_{\R^d}\E|u_t(x)|^2\,\d x}.
\end{equation*}

and define the nonlinear excitation index by
\begin{equation*}
e(t):=\lim_{\lambda \rightarrow \infty }\frac{\log \log \sE_t(\lambda)}{\log \lambda}.
\end{equation*}

\begin{corollary}\label{corollary-excitation}
The excitation index of the solution to \eqref{tfspde}, $e(t)$ is  equal to $\frac{2\alpha}{\alpha-d\beta}$.
\end{corollary}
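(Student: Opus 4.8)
The plan is to sandwich $\sE_t(\lambda)$ between two quantities whose growth in $\lambda$ is of the double-exponential form $\exp(\mathrm{const}\cdot\lambda^{p})$, with $p:=\frac{2\alpha}{\alpha-d\beta}$, and then read off the iterated logarithm. The first step is to exploit the Dirichlet condition: since $u_t(x)=0$ for $x\notin B(0,R)$, the defining integral collapses to
\begin{equation*}
\sE_t(\lambda)^2=\int_{B(0,R)}\E|u_t(x)|^2\,\d x,
\end{equation*}
an integral of a bounded integrand over a set of finite Lebesgue measure. This is what keeps $\sE_t(\lambda)$ finite and lets me trade the supremum against the volume $|B(0,R)|$ in one direction and restrict to a smaller concentric ball in the other.

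For the upper bound on $e(t)$, I would apply Theorem \ref{white:upperbound} directly under the integral sign:
\begin{equation*}
\sE_t(\lambda)^2\le |B(0,R)|\,\sup_{x\in B(0,R)}\E|u_t(x)|^2\le c_1|B(0,R)|\,e^{c_2\lambda^{p}t}.
\end{equation*}
Taking logarithms twice gives $\log\log\sE_t(\lambda)\le \log\bigl(\tfrac12\log(c_1|B(0,R)|)+\tfrac{c_2t}{2}\lambda^{p}\bigr)=p\log\lambda+O(1)$ as $\lambda\to\infty$, so dividing by $\log\lambda$ and letting $\lambda\to\infty$ yields $\limsup_{\lambda\to\infty}\frac{\log\log\sE_t(\lambda)}{\log\lambda}\le p$.

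For the matching lower bound I would restrict the integral to the interior ball $B(0,R-\epsilon)$, on which Theorem \ref{main-thm-white-noise} applies. The clean way to proceed is to invoke the \emph{uniform} exponential lower bound $\inf_{x\in B(0,R-\epsilon)}\E|u_t(x)|^2\ge c_3 e^{c_4\lambda^{p}t}$ that the proof of Theorem \ref{main-thm-white-noise} produces (its construction of the lower bound, built from Assumption \ref{lowerbound}, is uniform over the concentric ball). Then
\begin{equation*}
\sE_t(\lambda)^2\ge \int_{B(0,R-\epsilon)}\E|u_t(x)|^2\,\d x\ge c_3|B(0,R-\epsilon)|\,e^{c_4\lambda^{p}t},
\end{equation*}
and the same double-logarithm computation gives $\liminf_{\lambda\to\infty}\frac{\log\log\sE_t(\lambda)}{\log\lambda}\ge p$. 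Combining the two bounds shows the limit exists and equals $p=\frac{2\alpha}{\alpha-d\beta}$.

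The main obstacle is exactly this passage from a pointwise statement to the integral. If one insists on using only the pointwise double-logarithmic limit of Theorem \ref{main-thm-white-noise}, then a single point carries no Lebesgue mass and cannot bound the integral; one must upgrade the pointwise convergence to something uniform on a set of positive measure, e.g. via Egorov's theorem on the finite-measure set $B(0,R-\epsilon)$, which for each $\delta>0$ furnishes a positive-measure subset $A$ and a threshold beyond which $\E|u_t(x)|^2\ge \exp(\lambda^{p-\delta})$ for all $x\in A$; integrating over $A$ and letting $\delta\downarrow 0$ recovers the bound. I expect the uniform lower bound from the proof of Theorem \ref{main-thm-white-noise} to be available directly, making that the smoother route.
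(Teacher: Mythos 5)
Your proposal is correct and follows essentially the same route as the paper: the paper's proof likewise sandwiches $\int_{B(0,R)}\E|u_t(x)|^2\,\d x$ between $CR^d\sup_{x\in B(0,R)}\E|u_t(x)|^2$ and $C(R-\epsilon)^d\inf_{x\in B(0,R-\epsilon)}\E|u_t(x)|^2$ and then invokes Theorem \ref{main-thm-white-noise}, whose lower bound is in fact proved uniformly for $\mathcal{I}_{\epsilon,t}(\lambda)=\inf_{x\in B(0,R-\epsilon)}\E|u_t(x)|^2$ via the renewal inequality --- precisely the uniform statement you anticipated, so your Egorov fallback is unnecessary.
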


\noindent { The  second class of equation we introduce in this paper is with space colored noise stated as:}
\begin{equation}\label{tfspde-colored}
\begin{split}
\partial^\beta_t u_t(x)&=\mathcal{L} u_t(x)+I_t^{1-\beta}[\lambda \sigma(u_t(x))\dot{F}(t,\,x)],\, x\in B(0,R), t>0,\\
u_t(x)&=0,\ \ \ x\in B(0,R)^c.
\end{split}
\end{equation}

The only difference with \eqref{tfspde} is that the noise term is now colored in space. All the other conditions are the same. We now briefly describe the noise.
$\dot{F}$ denotes the Gaussian colored noise satisfying the following property,
$$
\E[\dot{F}(t,x)\dot{F}(s,y)]=\delta_0(t-s)f(x,y).
$$
{This can be interpreted more formally as}

\begin{equation}\label{covariance-colored}
Cov \bigg(\int \phi \d F, \int \psi \d F\bigg)=\int_0^\infty\int _{\rd}\d x\int_{\rd}\d y\phi_s(x)\psi_s(y)f(x-y),
\end{equation}
where we use the notation $\int \phi \d F$ to denote the wiener integral of $\phi$ with respect to $F$, and  the right-most integral converges absolutely.

We will assume that the spatial correlation of the noise term is given by the following function for  $\gamma<d$,
$$
f(x,y):=\frac{1}{|x-y|^\gamma}.
$$

Following Walsh \cite{walsh}, we define the mild solution of \eqref{tfspde-colored}  as the predictable solution to the following integral equation

\begin{equation}\label{mild-sol-colored}
\begin{split}
u_t(x)&=(\mathcal{G}_Bu_0)_t(x)+\lambda \int_{B(0,R)} \int_0^t G_B({t-s},x,y)\sigma(u_s(y))F(\d s \d y).
\end{split}
\end{equation}
As before, we will look at random field solution, which is defined by \eqref{mild-sol-colored}. We will also assume the following
\begin{equation*}
\gamma <\alpha\wedge d.
\end{equation*}
{ The condition we should have $\gamma<d$ follows from an integrability condition about the correlation function and $\gamma<\alpha$ }comes from an integrability condition needed for the existence and uniqueness of the solution.
Our first result on space colored noise case reads as follows.
\begin{theorem}\label{thm-colored-noise} Under the Assumption \ref{existence-uniqueness},
there exists a unique random field solution $u_t$  of \eqref{tfspde-colored} whose second moment satisfies
\begin{equation*}
\sup_{x\in B(0,R)}\E|u_t(x)|^2\leq c_5 \exp(c_6\lambda^{2\alpha/(\alpha-\gamma\beta)}t)\quad\text{for\,all}\quad t>0.
\end{equation*}
Here the constants $c_5,  c_6$  are positive numbers.
 \end{theorem}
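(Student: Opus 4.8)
The plan is to follow the by-now standard two-step programme for moment bounds of parabolic-type SPDEs, adapted to the space-time fractional kernel $G_B$ and to the colored covariance \eqref{covariance-colored}; the whole argument runs parallel to the white-noise Theorem \ref{white:upperbound}, with the spatial exponent $d$ replaced throughout by the correlation exponent $\gamma$. \emph{Step one: existence and uniqueness by Picard iteration.} I would set $u^{(0)}_t(x)=(\mathcal{G}_Bu_0)_t(x)$ and define recursively
$$u^{(n+1)}_t(x)=(\mathcal{G}_Bu_0)_t(x)+\lambda\int_{B(0,R)}\int_0^t G_B(t-s,x,y)\,\sigma(u^{(n)}_s(y))\,F(\d s\,\d y).$$
Applying the isometry coming from \eqref{covariance-colored} with $\phi_s(y)=G_B(t-s,x,y)\sigma(u^{(n)}_s(y))$ gives
$$\E|u^{(n+1)}_t(x)|^2\le 2|(\mathcal{G}_Bu_0)_t(x)|^2+2\lambda^2\int_0^t\!\!\int_{B(0,R)}\!\!\int_{B(0,R)} G_B(t-s,x,y)G_B(t-s,x,z)\,\E[\sigma(u^{(n)}_s(y))\sigma(u^{(n)}_s(z))]\,f(y-z)\,\d y\,\d z\,\d s.$$
Since $u_0$ is bounded and $G_B$ is sub-Markovian, $|(\mathcal{G}_Bu_0)_t(x)|\le\|u_0\|_\infty$; using $\sigma(x)\le L_\sigma|x|$ with Cauchy--Schwarz gives $\E[\sigma(u_s(y))\sigma(u_s(z))]\le L_\sigma^2\sup_w\E|u_s(w)|^2$. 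Running the same computation on the successive differences $u^{(n+1)}-u^{(n)}$ yields a recursion whose summability (hence an $L^2(\Omega)$-limit uniform in $x$ and locally uniform in $t$, and likewise uniqueness applied to the difference of two solutions) reduces entirely to the time-integrability of the kernel quantity isolated in Step two.

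\emph{Step two: the key kernel estimate.} The crux is to show that
$$\mathcal{K}(r):=\sup_{x\in B(0,R)}\int_{B(0,R)}\int_{B(0,R)}G_B(r,x,y)G_B(r,x,z)\,|y-z|^{-\gamma}\,\d y\,\d z\le C\,r^{-\gamma\beta/\alpha},$$
where $\gamma\beta/\alpha<1$ (which holds since $\gamma<\alpha$ and $\beta<1$), so that $\mathcal{K}$ is integrable at $r=0$. I would prove this by the domination $G_B\le G$, with $G$ the free space-time fractional kernel, and the subordination representation $G(r,x,y)=\int_0^\infty p(\tau,x,y)\,h_{r}(\tau)\,\d\tau$ of $G$ through the $\alpha$-stable density $p$ and the density $h_r$ of the inverse $\beta$-stable subordinator. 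Passing to Fourier variables, $\widehat G(r,\xi)=E_\beta(-\nu r^\beta|\xi|^\alpha)$ and the Fourier transform of $|\cdot|^{-\gamma}$ is a constant multiple of $|\xi|^{\gamma-d}$, so the double integral equals a constant times $\int_{\rd}|E_\beta(-\nu r^\beta|\xi|^\alpha)|^2|\xi|^{\gamma-d}\,\d\xi$; the substitution $\eta=r^{\beta/\alpha}\xi$ extracts exactly the factor $r^{-\gamma\beta/\alpha}$, while the residual integral $\int_{\rd}|E_\beta(-\nu|\eta|^\alpha)|^2|\eta|^{\gamma-d}\,\d\eta$ is finite because $\gamma<d$ tames the singularity at the origin and the decay $E_\beta(-x)\sim (x\,\Gamma(1-\beta))^{-1}$ tames the tail. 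This scaling computation, and the care needed to transfer it from $\R^d$ to the Dirichlet-killed kernel on $B(0,R)$ rather than the free kernel, is where the main work lies; I expect this to be the principal obstacle, drawing on the heat-kernel estimates of \cite{nane-mijena-2014} and \cite{cmn-12}.

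\emph{Step three: the renewal inequality and the $\lambda$-scaling.} Writing $H(t):=\sup_{x\in B(0,R)}\E|u_t(x)|^2$, Steps one and two combine into
$$H(t)\le c+c\lambda^2\int_0^t H(s)\,(t-s)^{-\gamma\beta/\alpha}\,\d s.$$
A fractional Gronwall (renewal) argument then controls $H$ by a Mittag--Leffler function: iterating the inequality produces the series $\sum_n (c\lambda^2\Gamma(1-\gamma\beta/\alpha))^n\, t^{n(1-\gamma\beta/\alpha)}/\Gamma(1+n(1-\gamma\beta/\alpha))$, that is $H(t)\le c\,E_{1-\gamma\beta/\alpha}\!\big(c\lambda^2\Gamma(1-\gamma\beta/\alpha)\,t^{1-\gamma\beta/\alpha}\big)$. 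Invoking the exponential asymptotics $E_\rho(z)\sim \rho^{-1}\exp(z^{1/\rho})$ with $\rho=1-\gamma\beta/\alpha$ and argument of order $\lambda^2 t^{1-\gamma\beta/\alpha}$ puts $(\lambda^2 t^{1-\gamma\beta/\alpha})^{1/(1-\gamma\beta/\alpha)}=\lambda^{2\alpha/(\alpha-\gamma\beta)}\,t$ into the exponent, which is exactly the asserted bound $c_5\exp(c_6\lambda^{2\alpha/(\alpha-\gamma\beta)}t)$. Everything in this last step is a mechanical consequence of the power $r^{-\gamma\beta/\alpha}$ established in Step two.
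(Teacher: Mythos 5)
Your proposal is correct and takes essentially the same route as the paper: Picard iteration with the covariance isometry \eqref{covariance-colored}, the key kernel bound of order $(t-s)^{-\gamma\beta/\alpha}$ obtained through the pointwise domination $G_B\leq G$ of \eqref{bounded-free-upper-bound} (which the paper imports as Lemmas \ref{lemma:covariance-upper-bound} and \ref{lemma:covariance-upper-bound-b-domain}, citing Lemma 2.7 of \cite{FooNane}, whose proof is precisely your Fourier--scaling computation with $\widehat{G}(r,\xi)=E_\beta(-\nu r^\beta|\xi|^\alpha)$), and the renewal inequality with $\rho=(\alpha-\gamma\beta)/\alpha$ (the paper cites Proposition \ref{prop:renewal-upper} instead of re-deriving the Mittag--Leffler iteration, but the mechanism is identical). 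The only deviation is that you re-prove the two cited ingredients, and in doing so you overestimate one difficulty: the transfer from $\R^d$ to the Dirichlet-killed kernel, which you flag as the principal obstacle, is immediate from $G_B\leq G$ together with the positivity of $f$, exactly as the paper observes.
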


Our main result for the space colored noise equation is the following theorem.

\begin{theorem}\label{excitation-colored}
Fix $t>0$ and $x\in B(0, R-\epsilon)$, we then have
\begin{equation*}
\lim_{\lambda\rightarrow \infty} \frac{\log \log \E|u_t(x)|^2}{\log \lambda}=\frac{2\alpha}{\alpha-\gamma\beta},
\end{equation*}
where $u_t$ is the unique solution to \eqref{tfspde-colored}.
\end{theorem}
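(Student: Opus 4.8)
The plan is to prove the stated identity by showing that $\tfrac{2\alpha}{\alpha-\gamma\beta}$ is at once an upper and a lower bound for $\tfrac{\log\log\E|u_t(x)|^2}{\log\lambda}$ as $\lambda\to\infty$. The upper bound is immediate from Theorem \ref{thm-colored-noise}: since $\E|u_t(x)|^2\le c_5\exp(c_6\lambda^{2\alpha/(\alpha-\gamma\beta)}t)$, we have $\log\E|u_t(x)|^2\le \log c_5+c_6 t\,\lambda^{2\alpha/(\alpha-\gamma\beta)}$, and for large $\lambda$ the exponential term dominates, so taking logarithms once more and dividing by $\log\lambda$ yields $\limsup_{\lambda\to\infty}\tfrac{\log\log\E|u_t(x)|^2}{\log\lambda}\le \tfrac{2\alpha}{\alpha-\gamma\beta}$. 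All the real work lies in the matching lower bound, for which I would mimic the whole-space argument behind Theorem \ref{main-thm-white-noise}, adapted to the colored covariance and the Dirichlet kernel.

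For the lower bound I would start from the mild formulation \eqref{mild-sol-colored}, apply the Walsh--Dalang isometry with the covariance \eqref{covariance-colored}, and obtain
\begin{equation*}
\E|u_t(x)|^2 = (\mathcal{G}_Bu_0)_t(x)^2 + \lambda^2\int_0^t\!\!\int_{B(0,R)}\!\!\int_{B(0,R)} G_B(t-s,x,y)G_B(t-s,x,z)\,\E[\sigma(u_s(y))\sigma(u_s(z))]\,f(y-z)\,dy\,dz\,ds.
\end{equation*}
Under Assumption \ref{lowerbound} one has $\sigma(w)\ge l_\sigma|w|$, so using the non-negativity of $u$ (which follows from $u_0\ge0$, $\sigma(0)=0$) we get $\E[\sigma(u_s(y))\sigma(u_s(z))]\ge l_\sigma^2\,\E[u_s(y)u_s(z)]\ge0$. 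Discarding the first term, restricting the spatial integrals to a compact ball $B(0,R-\epsilon')$ on which the Dirichlet kernel $G_B$ is bounded below by a constant multiple of the free space-time fractional kernel for $x\in B(0,R-\epsilon)$, and writing $F(t):=\inf_{y\in B(0,R-\epsilon')}\E|u_t(y)|^2$, one is led to a renewal-type inequality
\begin{equation*}
\E|u_t(x)|^2 \ge c\,\lambda^2 l_\sigma^2 \int_0^t F(s)\, k(t-s)\,ds,
\end{equation*}
where $k(r)$ is, up to constants, the double spatial integral of $G_B(r,\cdot,\cdot)\,G_B(r,\cdot,\cdot)$ against $f(y-z)=|y-z|^{-\gamma}$.

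The heart of the matter is the scaling of $k$. Using the subordination representation of $G_B$ and the two-sided kernel estimates, together with the self-similar form $G_B(r,x,y)\approx r^{-\beta d/\alpha}\Phi(|x-y|r^{-\beta/\alpha})$, the substitution $y=x+r^{\beta/\alpha}\tilde y$, $z=x+r^{\beta/\alpha}\tilde z$ shows that the singular integral against $|y-z|^{-\gamma}$ obeys $k(r)\asymp r^{-\gamma\beta/\alpha}$. Feeding this into the renewal inequality and taking the Laplace transform in $t$, the exponential growth rate $\mu^\ast=\mu^\ast(\lambda)$ of $\E|u_t(x)|^2$ is governed by the balance $\lambda^2\hat k(\mu^\ast)\asymp1$; since $\hat k(\mu)\asymp\mu^{\gamma\beta/\alpha-1}$ this gives $\mu^\ast\asymp\lambda^{2/(1-\gamma\beta/\alpha)}=\lambda^{2\alpha/(\alpha-\gamma\beta)}$, hence $\E|u_t(x)|^2\ge c'\exp(c''\lambda^{2\alpha/(\alpha-\gamma\beta)}t)$. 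Taking $\log\log$ and dividing by $\log\lambda$ then produces the matching lower bound $\liminf_{\lambda\to\infty}\tfrac{\log\log\E|u_t(x)|^2}{\log\lambda}\ge\tfrac{2\alpha}{\alpha-\gamma\beta}$, and the two bounds together give the claim.

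The main obstacle is twofold. First, closing the renewal inequality requires passing from the two-point quantity $\E[u_s(y)u_s(z)]$ to the infimum $F(s)$ uniformly over the compact ball; this demands a genuine positive lower bound on the two-point second moment (not merely its non-negativity), typically obtained by localization and a Picard-iteration/continuity argument exploiting non-negativity of $G_B$ and of $u_0$. Second, and more delicate, one needs sharp two-sided estimates on the \emph{Dirichlet} space-time fractional kernel $G_B$, valid uniformly for $x$ bounded away from $\partial B(0,R)$, in order to justify $k(r)\asymp r^{-\gamma\beta/\alpha}$: both the exact power of $\lambda$ and the constant in the exponent hinge on this exponent, and controlling the boundary effects in $B(0,R)$ — absent in the whole-space computation behind Theorem \ref{main-thm-white-noise} — is where most of the care is required.
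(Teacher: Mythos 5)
Your upper bound is exactly the paper's (it follows directly from Theorem \ref{thm-colored-noise}), but your lower bound has a genuine gap at its central step: you cannot close a scalar renewal inequality for $F(s):=\inf_y \E|u_s(y)|^2$ in the colored-noise case, because the Walsh--Dalang second-moment identity now produces the \emph{off-diagonal} two-point moment $\E[u_s(y)u_s(z)]$, and there is no available bound of the form $\E[u_s(y)u_s(z)]\geq c\,F(s)$. Non-negativity of $u$ only gives $\E[u_s(y)u_s(z)]\geq 0$; Cauchy--Schwarz goes the wrong way; and positive correlation of $u_s(y)$ with $u_s(z)$ is neither automatic nor established. You flag this yourself as ``the main obstacle'' to be handled by ``localization and a Picard-iteration/continuity argument,'' but no such argument is supplied, and this is precisely the point where the white-noise proof (where the isometry involves only the diagonal $\E|u_s(y)|^2$, so the renewal inequality closes) fails to transfer. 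The paper circumvents the issue by never closing a renewal inequality: it iterates the mild formulation indefinitely, keeping the full two-point function $\E|u_{s}(z)u_{s}(z')|$ at every stage, and terminates each chain of the resulting expansion \eqref{recursion} with the deterministic product $(\sG_B u_0)_{s_k+\tilde t}(z_k)(\sG_B u_0)_{s_k+\tilde t}(z_k')\geq g_t^2>0$. It then lower-bounds the $k$-th term by shrinking the temporal domains to length $t/k$ and the spatial domains to the balls $\sA_i, \sA_i'$ where Proposition \ref{density-lower-bound-killed-fractional} applies, arriving at $g_t^2\sum_{k\geq 1}(\lambda l_\sigma c_3)^{2k}(t/k)^{k(1-\gamma\beta/\alpha)}$, to which Lemma \ref{lower-bound-lambda} with $\rho=(\alpha-\gamma\beta)/\alpha$ and $\theta=\lambda^2$ gives the liminf $\geq 2\alpha/(\alpha-\gamma\beta)$ directly --- no Laplace-transform growth-rate analysis is needed or used.

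Two secondary points. First, as written your renewal-type inequality is vacuous: having discarded the term $(\mathcal{G}_Bu_0)_t(x)^2$, the inequality $F(t)\geq c\lambda^2\int_0^t F(s)k(t-s)\,ds$ is satisfied by $F\equiv 0$; you would need to retain the additive constant (via Remark \ref{lower-bound-frac-int-rep}, which gives $(\mathcal{G}_Bu_0)_{t}(x)\geq g_{t_0}>0$ on $B(0,R-\epsilon)$ after a time shift $\tilde t$). Second, your stated worry about needing sharp two-sided estimates on the Dirichlet kernel is misplaced: the bound $k(r)\gtrsim r^{-\gamma\beta/\alpha}$ needs only the interior small-time lower bound $G_B(r,x,y)\geq Cr^{-\beta d/\alpha}$ for $|x-y|<r^{\beta/\alpha}$, which is exactly Proposition \ref{density-lower-bound-killed-fractional} (volume $r^{2\beta d/\alpha}$ times $r^{-2\beta d/\alpha}$ times $f\gtrsim r^{-\gamma\beta/\alpha}$). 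But since that bound holds only for $r<t_0$, any argument built on it proves the result only for $t\leq t_0$, and you are missing the paper's final step: the shift $t\mapsto t+T$ in the mild formulation, which propagates the small-time conclusion to all $t>0$.
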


\begin{corollary}\label{corollary-excitation-colored}
The excitation index of the solution to \eqref{tfspde-colored}, $e(t)$ is  equal to $\frac{2\alpha}{\alpha-\gamma\beta}$.
\end{corollary}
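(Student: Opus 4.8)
The plan is to derive the corollary by sandwiching $\sE_t(\lambda)$ between two quantities of the form $\exp(\text{const}\cdot\lambda^{2\alpha/(\alpha-\gamma\beta)}t)$, so that the iterated logarithm extracts the exponent in the limit; write $\kappa:=\frac{2\alpha}{\alpha-\gamma\beta}$ throughout. The first observation is that, because of the Dirichlet condition $u_t(x)=0$ for $x\in B(0,R)^c$, the spatial integral defining $\sE_t$ collapses onto the bounded domain:
$$
\sE_t(\lambda)^2=\int_{\R^d}\E|u_t(x)|^2\,\d x=\int_{B(0,R)}\E|u_t(x)|^2\,\d x.
$$
This reduction is what makes the corollary a direct consequence of the pointwise statement in Theorem \ref{excitation-colored} combined with the uniform upper bound in Theorem \ref{thm-colored-noise}.

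For the upper half of the sandwich, I would bound the integral by the supremum, $\sE_t(\lambda)^2\leq |B(0,R)|\sup_{x\in B(0,R)}\E|u_t(x)|^2$, and invoke Theorem \ref{thm-colored-noise} (valid under Assumption \ref{existence-uniqueness}) to obtain $\sE_t(\lambda)^2\leq c_5|B(0,R)|\exp(c_6\lambda^\kappa t)$. Taking logarithms twice, one has $\log\log\sE_t(\lambda)=\log\big(\tfrac12(c_6\lambda^\kappa t+O(1))\big)=\kappa\log\lambda+O(1)$, where the square root, the multiplicative constants and the factor $t$ are all absorbed into the $O(1)$ term after the outer logarithm. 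Dividing by $\log\lambda$ gives $\limsup_{\lambda\to\infty}\frac{\log\log\sE_t(\lambda)}{\log\lambda}\leq\kappa$.

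For the matching lower half, I would restrict the integral to the interior ball $B(0,R-\epsilon)$ and use the lower bound produced in the course of proving Theorem \ref{excitation-colored} (which requires Assumption \ref{lowerbound}). The crucial point is that this lower bound, arising from interior lower estimates on the kernel $G_B$, can be taken uniform over $x\in B(0,R-\epsilon)$; granting this, there are positive constants $c,c'$ (depending on $t,\epsilon$) with $\E|u_t(x)|^2\geq c\exp(c'\lambda^\kappa t)$ for every $x\in B(0,R-\epsilon)$ and all large $\lambda$. Integrating over $B(0,R-\epsilon)$, a set of positive measure, yields $\sE_t(\lambda)^2\geq c\,|B(0,R-\epsilon)|\exp(c'\lambda^\kappa t)$, and the same iterated-logarithm computation gives $\liminf_{\lambda\to\infty}\frac{\log\log\sE_t(\lambda)}{\log\lambda}\geq\kappa$. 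Combining the two bounds shows the limit exists and equals $\kappa$, i.e. $e(t)=\frac{2\alpha}{\alpha-\gamma\beta}$.

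The step I expect to be the main obstacle is precisely this uniformity of the lower bound over the fixed interior ball $B(0,R-\epsilon)$. Theorem \ref{excitation-colored} is stated as a pointwise limit for each fixed $x$, and a bound at a single point says nothing about an integral, since individual points are Lebesgue-null. One must therefore revisit the lower-bound argument behind Theorem \ref{excitation-colored} and check that the constants $c,c'$ may be chosen independently of $x\in B(0,R-\epsilon)$ (or at least on some subset of positive measure), which hinges on the kernel estimates for $G_B$ being uniform on compact subsets of the interior. Once that uniformity is secured, the remainder is the routine sandwiching described above.
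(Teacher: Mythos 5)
Your proposal is correct and takes essentially the same route as the paper: the proof of Corollary \ref{corollary-excitation-colored} is, per the paper, identical to that of Corollary \ref{corollary-excitation}, namely the sandwich $C(R-\epsilon)^d\inf_{x\in B(0,R-\epsilon)}\E|u_t(x)|^2\leq \sE_t(\lambda)^2\leq CR^d\sup_{x\in B(0,R)}\E|u_t(x)|^2$ followed by the upper bound of Theorem \ref{thm-colored-noise} and the lower bound behind Theorem \ref{excitation-colored}. The uniformity over $B(0,R-\epsilon)$ that you flag as the main obstacle is already supplied in the paper: the lower-bound propositions are stated directly for the infimum $\mathcal{I}_{\epsilon,t}(\lambda)=\inf_{x\in B(0,R-\epsilon)}\E|u_t(x)|^2$, because Proposition \ref{density-lower-bound-killed-fractional} and the constant $g_t$ of Remark \ref{lower-bound-frac-int-rep} are uniform on the interior ball.
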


A key difference from the methods used in \cite{foondun-tian-liu-2015} and \cite{foodun-liu-omaba-2014} is that, here we need to overcome some new technical difficulties.  Compared with the usual  heat equation with the same boundary conditions, time fractional equations have significantly different behavior. This is the source of the main difficulties we have to overcome. Our method will rely on heat kernel estimates which we will prove later on.

{We now briefly give an outline of the paper. In this paper we employ similar  methods as  in \cite{FooNane} and \cite{foondun-tian-liu-2015} with crucial changes to prove our main results.  We give some preliminary results in section 2. We prove a number of interesting properties of the heat kernel of the time fractional heat type partial differential equations that are essential to the proof of our main results. The most important result in this section is Lemma \ref{density-lower-bound-killed-fractional}. The proofs of the results in the space-time white noise are given in Section 3. In Section 4, we prove the main  results about the space colored noise equation. We give and extension of the results stated in the introduction in section 5.
Throughout the paper, we use the letter $C$ or $c$ with or without subscripts to denote
a constant whose value is not important and may vary from places to places.  If $x\in \R^d$, then $|x|$ will denote the euclidean norm of $x\in\R^d$, while when $A\subset \R^d$, $|A|$ will denote the Lebesgue measure of $A$.}


\section{Preliminaries.}
As mentioned in the introduction, the behaviour of the heat kernel $G_B(t, x)$ will play an important role. This section will mainly be devoted to estimates involving this quantity.   Let $X_t$ denote a symmetric $\alpha$ stable process with density function denoted by $p(t,\,x)$. This is characterized through the Fourier transform which is given by

\begin{equation}\label{Eq:F_pX}
\widehat{p(t,\,\xi)}=e^{-t\nu|\xi|^\alpha}.
\end{equation}

Let $D=\{D_r,\,r\ge0\}$ denote a $\beta$-stable subordinator and $E_t$ be its first passage time. It is known that the density of the time changed process $X_{E_t}$ is given by the $G_t(x)$. By conditioning, we have

\begin{equation}\label{Eq:Green1}
G_t(x)=\int_{0}^\infty p(s,\,x) f_{E_t}(s)\d s,
\end{equation}
where
\begin{equation}\label{Etdens0}
f_{E_t}(x)=t\beta^{-1}x^{-1-1/\beta}g_\beta(tx^{-1/\beta}),
\end{equation}
where $g_\beta(\cdot)$ is the density function of $D_1$ and is infinitely differentiable on the entire real line, with $g_\beta(u)=0$ for $u\le 0$. Moreover,
\begin{equation}\label{Eq:gbeta0}
g_\beta(u)\sim K(\beta/u)^{(1-\beta/2)/(1-\beta)}\exp\{-|1-\beta|(u/\beta)^{\beta/(\beta-1)}\}\quad\mbox{as}\,\, u\to0+,
\end{equation}
and
\begin{equation}\label{Eq:gbetainf}
g_\beta(u)\sim\frac{\beta}{\Gamma(1-\beta)}u^{-\beta-1} \quad\mbox{as}\,\, u\to\infty.
\end{equation}

 We will need  the following properties of the heat kernel of stable process.
  \begin{itemize}
\item \begin{equation*}
p(t,\,x)=t^{-d/\alpha}p(1,\,t^{-1/\alpha}x).
\end{equation*}
\item \begin{equation*}
p(st,\,x)=s^{-d/\alpha}p(t,\,s^{-1/\alpha}x).
\end{equation*}
\item $p(t,\,x)\geq p(t,\,y)$ whenever $|x|\leq |y|$.
\item It is well known that the transition density $p(t,\,x)$ of any strictly stable process satisfies the following
 \begin{equation}\label{stable-density-bounds}
 c_1\bigg(t^{-d/\alpha}\wedge \frac{t}{|x|^{d+\alpha}}\bigg)\leq p(t,\,x)\leq c_2\bigg(t^{-d/\alpha}\wedge \frac{t}{|x|^{d+\alpha}}\bigg),
 \end{equation}
 where $c_1$ and $c_2$ are positive constants.

\end{itemize}



The $L^2$-norm of the heat kernel can be calculated as follows.
\begin{lemma} [ Lemma 1 in \cite{nane-mijena-2014}]\label{Lem:Green1} Suppose that $d < 2\alpha$, then
\begin{equation}\label{Eq:Greenint}
\int_{{\R^d}}G^2_t(x)\d x  =C^\ast t^{-\beta d/\alpha},
\end{equation}
where the constant $C^{\ast}$ is given by
\begin{equation*}
C^\ast = \frac{(\nu )^{-d/\alpha}2\pi^{d/2}}{\alpha\Gamma(\frac d2)}\frac{1}{(2\pi)^d}\int_0^\infty z^{d/\alpha-1} (E_\beta(-z))^2 \d z.
\end{equation*}
Here $E_\beta(x)$ is the Mittag-Leffler function defined by
\begin{equation}\label{ML-function}
E_\beta(x) = \sum_{k=0}^\infty\frac{x^k}{\Gamma(1+\beta k)}.
\end{equation}
\end{lemma}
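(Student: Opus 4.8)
The plan is to move the whole computation to the Fourier side, where the subordination structure in \eqref{Eq:Green1} collapses into a single Mittag-Leffler function, and then to reduce everything to a one-dimensional integral by passing to polar coordinates. First I would invoke Parseval's identity in the convention matching \eqref{Eq:F_pX}, namely
$$\int_{\R^d} G_t^2(x)\,\d x = \frac{1}{(2\pi)^d}\int_{\R^d}\bigl|\widehat{G_t}(\xi)\bigr|^2\,\d\xi,$$
so that the problem reduces to computing $\widehat{G_t}(\xi)$ and then integrating its square.

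The heart of the argument is the identification of $\widehat{G_t}$. Starting from \eqref{Eq:Green1}, using $\widehat{p(s,\xi)}=e^{-s\nu|\xi|^\alpha}$ from \eqref{Eq:F_pX}, and interchanging the $x$- and $s$-integrals by Fubini, I get
$$\widehat{G_t}(\xi)=\int_0^\infty e^{-s\nu|\xi|^\alpha}f_{E_t}(s)\,\d s = \E\bigl[e^{-\nu|\xi|^\alpha E_t}\bigr].$$
The Laplace transform of the density of the inverse $\beta$-stable subordinator is a Mittag-Leffler function, $\E[e^{-\lambda E_t}]=E_\beta(-\lambda t^\beta)$; this can be seen by checking that $\widehat{G_t}(\xi)$ solves the scalar fractional relaxation equation $\partial^\beta_t\widehat{G_t}(\xi)=-\nu|\xi|^\alpha\widehat{G_t}(\xi)$ with $\widehat{G_0}(\xi)=1$. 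Hence
$$\widehat{G_t}(\xi)=E_\beta\bigl(-\nu|\xi|^\alpha t^\beta\bigr),$$
which is real-valued, so $|\widehat{G_t}(\xi)|^2=\bigl(E_\beta(-\nu|\xi|^\alpha t^\beta)\bigr)^2$.

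Next I would pass to polar coordinates via $\int_{\R^d}h(|\xi|)\,\d\xi=\frac{2\pi^{d/2}}{\Gamma(d/2)}\int_0^\infty h(r)r^{d-1}\,\d r$ and then substitute $z=\nu t^\beta r^\alpha$. The Jacobian gives $r^{d-1}\,\d r=\frac{1}{\alpha}(\nu t^\beta)^{-d/\alpha}z^{d/\alpha-1}\,\d z$, which cleanly factors the time dependence out as $t^{-\beta d/\alpha}$ and leaves exactly the constant $C^\ast$ claimed in the statement.

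The main obstacle, and the only place the hypothesis $d<2\alpha$ is used, is establishing the finiteness of $\int_0^\infty z^{d/\alpha-1}\bigl(E_\beta(-z)\bigr)^2\,\d z$ (which is also what retroactively justifies the use of Parseval and Fubini above). Near $z=0$ one has $E_\beta(-z)\to 1$, so the integrand is comparable to $z^{d/\alpha-1}$, integrable for every $d>0$. Near $z=\infty$ I would use the standard asymptotic $E_\beta(-z)\sim \frac{1}{\Gamma(1-\beta)}z^{-1}$, which makes the integrand comparable to $z^{d/\alpha-3}$; this is integrable precisely when $d/\alpha-3<-1$, i.e. $d<2\alpha$. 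Assembling the constants from the polar-coordinate and substitution steps then yields the stated value of $C^\ast$.
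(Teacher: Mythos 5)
Your proof is correct and takes essentially the same route as the original proof of this lemma (which the present paper only cites from \cite{nane-mijena-2014}): Plancherel in the convention of \eqref{Eq:F_pX}, the subordination identity $\widehat{G_t}(\xi)=\E\bigl[e^{-\nu|\xi|^\alpha E_t}\bigr]=E_\beta(-\nu t^\beta|\xi|^\alpha)$, polar coordinates with the substitution $z=\nu t^\beta r^\alpha$, and the Mittag--Leffler asymptotic $E_\beta(-z)\sim z^{-1}/\Gamma(1-\beta)$ as $z\to\infty$, which shows that $d<2\alpha$ is precisely the condition making $\int_0^\infty z^{d/\alpha-1}(E_\beta(-z))^2\,\d z$ finite. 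Your bookkeeping of the constants reproduces the stated $C^\ast$ exactly, and your remark that the finiteness retroactively justifies Parseval and Fubini is the right way to close the argument.
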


\begin{remark}Let $D\subset \rd$ be a bounded domain.
Let $p_D(t,x,y)$ denote the heat kernel of the equation \eqref{tfpde-dirichlet} when $\beta=1$. A well known fact is that
\begin{equation}\label{upper-bound-killed-stable}
p_D(t,x,y)\leq p(s,x,y)\ \ \mathrm{for \ all}\  x, y\ \in D, t>0.
\end{equation}

 Using the representation from Meerschaert et al.  \cite{cmn-12} and \cite{mnv-09}
$$
G_D(t,x,y)=\int_0^\infty p_D(s,x,y)f_{E_t}(s)ds.
$$
and equation \eqref{upper-bound-killed-stable}
we get
\begin{equation}\label{bounded-free-upper-bound}
G_D(t,x,y)\leq G(t,x,y)\ \ \mathrm{for \ all}\  x, y\ \in D, t>0.
\end{equation}
This fact will be crucial in proving the existence and uniqueness of solutions to equations \eqref{tfspde} and \eqref{tfspde-colored}.

\end{remark}

The next result is crucial in getting the upper bounds for the spatially colored noise equation.
 \begin{lemma}[Lemma 2.7 in \cite{FooNane}]\label{lemma:covariance-upper-bound}
Suppose that $\gamma<\alpha$, then there exists a constant $c_1$ such that for all $x,\,y\in \R^d$, we have
\begin{equation*}
\int_{\R^d}\int_{\R^d}G_t(x-w)G_t(y-z)f(z,\,w)\d w \d z\leq \frac{c_1}{t^{\gamma\beta/\alpha}}.
\end{equation*}
 \end{lemma}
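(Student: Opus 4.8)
The plan is to reduce the double space integral to a single frequency integral via Plancherel's theorem, in exact parallel with the computation of $\int G_t^2$ in Lemma \ref{Lem:Green1}, and then to read off the correct power of $t$ by scaling. The starting point is the identity $\widehat{G_t}(\xi) = E_\beta(-\nu t^\beta|\xi|^\alpha)$, which follows from the subordination formula \eqref{Eq:Green1} together with $\widehat{p(s,\,\xi)} = e^{-s\nu|\xi|^\alpha}$ and the Laplace transform of the first-passage density $f_{E_t}$; this is precisely the identity already implicit in the constant $C^\ast$ of Lemma \ref{Lem:Green1}.

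First I would fix $x,y$ and write $g(w) := G_t(x-w)$ and $h(z) := G_t(y-z)$, so that the left-hand side is the bilinear Riesz form $\int\int g(w)h(z)|w-z|^{-\gamma}\,dw\,dz$. Since $0<\gamma<d$, the Riesz kernel has Fourier transform $c_{d,\gamma}|\xi|^{\gamma-d}$, and the form equals $c_{d,\gamma}\int \hat g(\xi)\overline{\hat h(\xi)}|\xi|^{\gamma-d}\,d\xi$. Because $G_t$ is real and radially symmetric, $\widehat{G_t}$ is real and $\hat g(\xi)=e^{-i\xi\cdot x}\widehat{G_t}(\xi)$, $\hat h(\xi)=e^{-i\xi\cdot y}\widehat{G_t}(\xi)$, so the integrand carries only the unimodular phase $e^{i\xi\cdot(y-x)}$. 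Bounding this phase by $1$ gives, uniformly in $x,y$,
$$\int_{\R^d}\int_{\R^d}G_t(x-w)G_t(y-z)|w-z|^{-\gamma}\,dw\,dz \le c_{d,\gamma}\int_{\R^d}\big(E_\beta(-\nu t^\beta|\xi|^\alpha)\big)^2|\xi|^{\gamma-d}\,d\xi.$$

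It then remains to evaluate the right-hand integral. Passing to polar coordinates and substituting $u=\nu t^\beta\rho^\alpha$ with $\rho=|\xi|$ factors out the scaling: the integral equals $c\,t^{-\beta\gamma/\alpha}\int_0^\infty\big(E_\beta(-u)\big)^2 u^{\gamma/\alpha-1}\,du$, which is exactly the desired power of $t$ times a constant. I would close by checking that this one-dimensional integral is finite: near $u=0$ the integrand behaves like $u^{\gamma/\alpha-1}$, integrable since $\gamma>0$, while as $u\to\infty$ the Mittag-Leffler asymptotic $E_\beta(-u)\sim (u\,\Gamma(1-\beta))^{-1}$ makes it decay like $u^{\gamma/\alpha-3}$, integrable whenever $\gamma<2\alpha$, in particular under the standing hypothesis $\gamma<\alpha$. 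The main point requiring care is the justification of the Riesz/Plancherel identity and of the interchange yielding the uniform-in-$(x,y)$ bound; since $G_t\in L^1\cap L^2$ and the frequency integral converges, this is routine but should be recorded. An alternative route, which avoids invoking the Riesz transform, is to insert the subordination formula \eqref{Eq:Green1} for both factors, carry out the inner frequency integral to obtain the bound $c(s+s')^{-\gamma/\alpha}\le c(ss')^{-\gamma/2\alpha}$, and then recognize the outer double integral as $c\,(\E[E_t^{-\gamma/2\alpha}])^2$, evaluated through the negative-moment formula for the inverse stable subordinator; this produces the same factor $t^{-\beta\gamma/\alpha}$.
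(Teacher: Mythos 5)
Your proof is correct, but note that the paper does not actually prove this lemma: it is imported verbatim as Lemma 2.7 of \cite{FooNane}, so there is no internal argument to compare against, and your write-up would stand as a self-contained proof. Judged on its own merits, the main route is sound. The identity $\widehat{G_t}(\xi)=E_\beta(-\nu t^\beta|\xi|^\alpha)$ does follow from the subordination formula \eqref{Eq:Green1}, since $\int_0^\infty e^{-s\nu|\xi|^\alpha}f_{E_t}(s)\,\d s=\E\bigl[e^{-\nu|\xi|^\alpha E_t}\bigr]$ is the Laplace transform of the inverse subordinator, which is Mittag-Leffler; discarding the phase $e^{i\xi\cdot(y-x)}$ is legitimate and gives uniformity in $(x,y)$ because $(\widehat{G_t}(\xi))^2|\xi|^{\gamma-d}\geq 0$; the substitution $u=\nu t^\beta\rho^\alpha$ produces exactly $t^{-\beta\gamma/\alpha}$; and the convergence check is right at both ends (integrability at zero from $\gamma>0$, and $E_\beta(-u)\sim(u\,\Gamma(1-\beta))^{-1}$ giving decay $u^{\gamma/\alpha-3}$ at infinity, so only $\gamma<2\alpha$ is needed --- the stronger standing hypothesis $\gamma<\alpha$ is used elsewhere, to make the renewal exponent $(\alpha-\gamma\beta)/\alpha$ work). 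Your alternative subordination route also checks out: the inner estimate $c(s+s')^{-\gamma/\alpha}$, the decoupling $(s+s')^{-\gamma/\alpha}\leq(ss')^{-\gamma/(2\alpha)}$ by AM--GM, and the negative-moment formula $\E[E_t^{-p}]=\frac{\Gamma(1-p)}{\Gamma(1-p\beta)}\,t^{-p\beta}$, valid here since $p=\gamma/(2\alpha)<1/2<1$, reproduce the same bound; this second route is the one most consonant with how the present paper handles $G_B$ (conditioning on $f_{E_t}$ as in \eqref{Eq:Green1}, Proposition \ref{density-lower-bound-killed-fractional} and Remark \ref{lower-bound-frac-int-rep}), while your Fourier route parallels the computation of $C^\ast$ in Lemma \ref{Lem:Green1}. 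Two small points to record if you write this up: the hypothesis is implicitly $0<\gamma<d$ (needed both for the Fourier transform $c_{d,\gamma}|\xi|^{\gamma-d}$ of the Riesz kernel and for local integrability of $f$), and the distributional Parseval identity does need the justification you flag, e.g.\ by regularizing the Riesz kernel, since $G_t$ is unbounded near the origin when $d\geq\alpha$; the fact that $G_t\in L^1\cap L^2$ under the standing assumption $d<2\alpha$ (Lemma \ref{Lem:Green1}) suffices for this.
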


The next lemma follows from the previous lemma by using \eqref{bounded-free-upper-bound}.
\begin{lemma}\label{lemma:covariance-upper-bound-b-domain} Let $D\subset\rd$ be a bounded domain.
Suppose that $\gamma<\alpha$, then there exists a constant $c_1$ such that for all $x,\,y\in D$, we have
\begin{equation*}
\int_{D}\int_{D}G_D(t, x,w)G_D(t, y,z)f(z,\,w)\d w \d z\leq \frac{c_1}{t^{\gamma\beta/\alpha}}.
\end{equation*}
 \end{lemma}
The next proposition  is the crucial result in proving the lower bounds in Theorems \ref{white:upperbound} and \ref{excitation-colored}.

\begin{proposition}\label{density-lower-bound-killed-fractional}
Fix $\epsilon>0$, then there exists $t_0>0$ such that for all $x,y\in B(0, R-\epsilon)$ and for all $t<t_0$ and  $|x-y|<t^{\beta/\alpha}$ we have
$$
G_B(t,x,y)\geq C t^{-\beta d/\alpha},
$$
for some constant $C>0$.

\end{proposition}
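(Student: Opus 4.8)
The plan is to exploit the subordination representation (the ball-analogue of the formula in the Remark, from \cite{cmn-12,mnv-09})
\[
G_B(t,x,y)=\int_0^\infty p_B(s,x,y)\,f_{E_t}(s)\,\d s,
\]
where $p_B(s,x,y)$ is the transition density of the $\alpha$-stable process killed on exiting $B=B(0,R)$ and $f_{E_t}$ is the inverse-subordinator density in \eqref{Etdens0}. Since $f_{E_t}$ concentrates its mass near $s\approx t^\beta$, and at that time scale $s^{1/\alpha}\approx t^{\beta/\alpha}$, the hypothesis $|x-y|<t^{\beta/\alpha}$ says precisely that $x$ and $y$ are ``diffusively close'' for the killed kernel at the relevant scale. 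I would therefore discard all of the integral except a fixed window $s\in[t^\beta,\,bt^\beta]$, with $b>1$ to be chosen, on which I expect both $p_B(s,x,y)\gtrsim s^{-d/\alpha}\gtrsim t^{-\beta d/\alpha}$ and $\int_{t^\beta}^{bt^\beta}f_{E_t}(s)\,\d s\gtrsim 1$.

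The heart of the matter --- and the step I expect to be the main obstacle --- is an interior lower bound for the killed stable kernel: for $x,y\in B(0,R-\epsilon)$, $|x-y|\le s^{1/\alpha}$ and $s$ small, $p_B(s,x,y)\ge\tfrac12 c_1 s^{-d/\alpha}$. To obtain it I would start from the strong Markov (Dynkin) decomposition
\[
p_B(s,x,y)=p(s,x,y)-\E_x\!\big[\,p(s-\tau_B,X_{\tau_B},y)\,;\,\tau_B\le s\,\big],
\]
where $\tau_B$ is the first exit time of the stable process from $B$. For the main term, \eqref{stable-density-bounds} gives $p(s,x,y)\ge c_1 s^{-d/\alpha}$ whenever $|x-y|\le s^{1/\alpha}$. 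For the correction term I would use two facts: on $\{\tau_B\le s\}$ the process has left $B$, so $|X_{\tau_B}|\ge R$ and hence $|X_{\tau_B}-y|\ge\epsilon$ for $y\in B(0,R-\epsilon)$; and \eqref{stable-density-bounds} implies $\sup_{\delta>0}p(\delta,z,y)\le c_2|z-y|^{-d}$, the supremum being attained near $\delta\approx|z-y|^\alpha$. Combining these gives $p(s-\tau_B,X_{\tau_B},y)\le c_2\epsilon^{-d}$ on the event, so the correction is at most $c_2\epsilon^{-d}\,\P_x(\tau_B\le s)$. Finally, reaching $\partial B$ from $x\in B(0,R-\epsilon)$ requires a displacement of at least $\epsilon$, and a standard small-time exit estimate for the stable process gives $\P_x(\tau_B\le s)\le Cs\epsilon^{-\alpha}$. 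Thus the correction is $O(s\,\epsilon^{-d-\alpha})=o(s^{-d/\alpha})$, and choosing $s\le s_0(\epsilon)\asymp\epsilon^\alpha$ forces it below half of the main term, yielding the interior bound.

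With the interior bound in hand I would finish by bookkeeping. Fix $b>1$ and restrict to $t<t_0:=(s_0/b)^{1/\beta}$, so every $s\in[t^\beta,bt^\beta]$ satisfies $s\le s_0$; on this window $s^{1/\alpha}\ge t^{\beta/\alpha}>|x-y|$, so the interior bound applies and $p_B(s,x,y)\ge\tfrac12 c_1 s^{-d/\alpha}\ge\tfrac12 c_1 b^{-d/\alpha}t^{-\beta d/\alpha}$. For the subordinator factor I would use the self-similarity $E_t\stackrel{d}{=}t^\beta E_1$ of the inverse $\beta$-stable subordinator, giving
\[
\int_{t^\beta}^{bt^\beta}f_{E_t}(s)\,\d s=\P\big(t^\beta\le E_t\le bt^\beta\big)=\P\big(1\le E_1\le b\big)=:p_0,
\]
a strictly positive constant for $b>1$, since $f_{E_1}$ is positive on $(0,\infty)$. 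Combining the two estimates,
\[
G_B(t,x,y)\ge\int_{t^\beta}^{bt^\beta}p_B(s,x,y)\,f_{E_t}(s)\,\d s\ge\tfrac12 c_1 b^{-d/\alpha}p_0\,t^{-\beta d/\alpha}=:C\,t^{-\beta d/\alpha},
\]
which is the desired bound. The only genuine work beyond scaling and the positivity of $p_0$ is the interior estimate of the second paragraph.
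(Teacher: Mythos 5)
Your proof is correct, and at the top level it is the same argument as the paper's: subordinate, keep only the window $s\asymp t^{\beta}$, use interior comparability of the killed kernel with the free one, and invoke the near-diagonal lower bound $p(s,x,y)\ge c_1 s^{-d/\alpha}$ from \eqref{stable-density-bounds}. Indeed, your window $[t^{\beta}, bt^{\beta}]$ and the scaling identity $E_t \stackrel{d}{=} t^{\beta}E_1$ are exactly the paper's change of variables $s=(t/u)^{\beta}$, $u\in[1/2,1]$ (which is your window with $b=2^{\beta}$), and your constant $p_0=\P(1\le E_1\le b)$ plays the role of the paper's $\int_{1/2}^{1}u^{\beta d/\alpha}g_\beta(u)\,\d u>0$. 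The one genuinely different ingredient is the step you correctly identified as the heart of the matter: the paper simply cites Proposition 2.1 of \cite{foondun-tian-liu-2015} for the comparison $p_B(s,x,y)\ge c_1\,p(s,x,y)$ for $x,y\in B(0,R-\epsilon)$ and $s\le T_0$, whereas you re-derive the (weaker, near-diagonal) version of it from Hunt's formula $p_B(s,x,y)=p(s,x,y)-\E_x[p(s-\tau_B,X_{\tau_B},y);\tau_B\le s]$, the off-diagonal bound $\sup_{\delta>0}p(\delta,z,y)\le c_2|z-y|^{-d}$, and the exit estimate $\P_x(\tau_B\le s)\le Cs\epsilon^{-\alpha}$; all three of these facts are sound (the exit estimate follows from a maximal inequality plus the tail bound for stable processes, and the first two cover $\alpha=2$ as well). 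What your route buys is self-containedness and an explicit quantification $s_0\asymp\epsilon^{\alpha}$, hence $t_0\asymp\epsilon^{\alpha/\beta}$; what the paper's citation buys is brevity and a comparison valid for all $x,y$ in the smaller ball without the restriction $|x-y|\le s^{1/\alpha}$, though only the near-diagonal case is ever used here.
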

\begin{proof}
We use the representation
$$
G_B(t,x,y)=\int_0^\infty p_B(s,x,y)f_{E_t}(s)ds.
$$

By proposition 2.1 in \cite{foondun-tian-liu-2015}  there exists a $T_0>0$ such that $p_D(t,x,y )\geq c_1 p(t,x,y )$ whenever $t\leq T_0$.
 Now  this and the representation \eqref{Etdens0} with a change of variables  we get
 \begin{eqnarray}
 G_B(t,x,y)&\geq & \int_0^{T_0} p_B(s,x,y)f_{E_t}(s)ds\nonumber\\
 &\geq &c_1\int_0^{T_0} p(s,x,y)f_{E_t}(s)ds\nonumber\\
 &=&c_1\int_{tT_0^{-1/\beta}}^\infty p((t/u)^\beta,x,y)g_\beta(u)du.
 \end{eqnarray}
  Now suppose that $tT_0^{-1/\beta}<1/2$ and $|x-y|<t^{\beta/\alpha}$ hence $t/|x-y|^{\alpha/\beta}>1$ and for $u<t/|x-y|^{\alpha/\beta}$ we have $(t/u)^\beta>|x-y|^\alpha$ or equivalently $|x-y|<[(t/u)^\beta]^{1/\alpha}$, therefore using all of these observations with \eqref{stable-density-bounds} we obtain
  \begin{eqnarray}
 G_B(t,x,y)&\geq & c_1\int_{tT_0^{-1/\beta}}^\infty p((t/u)^\beta,x,y)g_\beta(u)du\nonumber\\
 &\geq & c_1\int_{tT_0^{-1/\beta}}^{t/|x-y|^{\alpha/\beta}} p((t/u)^\beta,x,y)g_\beta(u)du\nonumber\\
 &\geq &C \int_{tT_0^{-1/\beta}}^{t/|x-y|^{\alpha/\beta}} (1/(t/u)^{\beta})^{d/\alpha}g_\beta(u)du\nonumber\\
 &\geq &C\int_{1/2}^{1} t^{-\beta d/\alpha} u^{\beta d/\alpha}g_\beta(u)du\nonumber\\
 &=&  C t^{-\beta d/\alpha}\int_{1/2}^{1} u^{\beta d/\alpha}g_\beta(u)du= C t^{-\beta d/\alpha}.\nonumber
 \end{eqnarray}
\end{proof}

\begin{remark}\label{lower-bound-frac-int-rep}
Recall that for any $t>0$ and $x\in B(0,R)$
$$
(\mathcal{G}_Bu)_t(x):=\int _{B(0,R)}G_B(t,x,y)u_0(y)dy.
$$
By remark 2.2 in Foondun et al. \cite{foondun-tian-liu-2015} we know that for fixed $\epsilon>0$ we  have
$$h_t:=\inf_{x\in B(0,R-\epsilon)}\inf_{s\leq t} {(\tilde{\mathcal{G}}_Bu)_s}(x)=\inf_{x\in B(0,R-\epsilon)} {(\tilde{\mathcal{G}}_Du)_t}(x)>0,$$
where ${(\tilde{\mathcal{G}}_Bu)_s}(x)=\int _{B(0,R)}p_B(t,x,y)u_0(y)dy$ is the killed semigroup of stable process and is the solution of \eqref{tfpde-dirichlet} when  $\beta=1$.

By a simple conditioning we have
 \begin{equation}
 \begin{split}
(\mathcal{G}_Bu)_{s+t_0}(x)&:=\int _{B(0,R)}G_B(s+t_0,x,y)u_0(y)dy=\int_0^\infty{(\tilde{\mathcal{G}}_B u)_{s'}}(x) f_{E_{s+t_0}}(s')ds'\\
&\geq \int_0^{s+t_0} {(\tilde{\mathcal{G}}_Bu)_s}(x) f_{E_{s+t_0}}(s)ds\\
&\geq h_{s+t_0} \int_0^{s+t_0} f_{E_{s+t_0}}(s)ds:=g_t>0,
\end{split}
\end{equation}
for any fixed $t_0>0$ and all $s\leq t$.
\end{remark}

We end this section with a few results from \cite{foondun-tian-liu-2015} and  \cite{foodun-liu-omaba-2014}. These will  be useful for the proofs of our main results.
\begin{proposition}\label{prop:renewal-upper}[Proposition 2.5 in \cite{foodun-liu-omaba-2014}]
Let $\rho>0$ and suppose $f(t)$ is a locally integrable function satisfying
$$
f(t)\leq c_1+\kappa \int_0^t(t-s)^{\rho-1} f(s)\d s \ \ \mathrm{for \ all} \ \ t>0,
$$
where $c_1$ is some positive number. Then, we have
$$
f(t)\leq c_2\exp(c_3(\Gamma(\rho))^{1/\rho}\kappa^{1/\rho} t)\  \ \mathrm{for \ all} \ \ t>0,
$$ for some positive constants $c_2$ and $c_3$.
\end{proposition}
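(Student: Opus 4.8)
The plan is to treat this as a weakly singular Gronwall (renewal) inequality and proceed by Picard-type iteration of the convolution operator. I would define the positive linear operator $(Tg)(t) := \kappa\int_0^t (t-s)^{\rho-1} g(s)\,\d s$, so that the hypothesis reads $f \le c_1 + Tf$. Since the kernel $(t-s)^{\rho-1}$ is nonnegative, $T$ is order-preserving, and iterating the inequality $N$ times yields $f(t) \le c_1 \sum_{n=0}^{N-1}(T^n\mathbf{1})(t) + (T^N f)(t)$, where $\mathbf{1}$ is the constant function $1$. The aim is to let $N\to\infty$, identify the series, and bound it by an exponential.

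The next step is to compute the iterated kernels explicitly. Using the Beta-integral identity $\int_0^t (t-s)^{\rho-1} s^{\mu-1}\,\d s = \frac{\Gamma(\rho)\Gamma(\mu)}{\Gamma(\rho+\mu)} t^{\rho+\mu-1}$ (equivalently, the semigroup property of the Riemann--Liouville fractional integral), a straightforward induction on $n$ gives
$$(T^n\mathbf{1})(t) = \frac{\big(\kappa\,\Gamma(\rho)\,t^\rho\big)^n}{\Gamma(n\rho+1)}.$$
Hence the partial sums are precisely those of a Mittag--Leffler function: $\sum_{n=0}^\infty (T^n\mathbf{1})(t) = E_\rho\!\big(\kappa\Gamma(\rho) t^\rho\big)$, with $E_\rho$ as in \eqref{ML-function}.

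To pass to the limit I would show the remainder $(T^N f)(t)$ vanishes. The $N$-fold convolution of $s^{\rho-1}$ equals $\frac{\Gamma(\rho)^N}{\Gamma(N\rho)} s^{N\rho-1}$, so for $N\rho\ge 1$ and $t$ in a fixed interval $[0,\mathcal{T}]$,
$$|(T^N f)(t)| \le \frac{(\kappa\Gamma(\rho))^N}{\Gamma(N\rho)}\, t^{N\rho-1}\int_0^t |f(s)|\,\d s \le \frac{(\kappa\Gamma(\rho)\mathcal{T}^\rho)^N}{\mathcal{T}\,\Gamma(N\rho)}\,\|f\|_{L^1[0,\mathcal{T}]},$$
which tends to $0$ as $N\to\infty$ because $\Gamma(N\rho)$ grows faster than any geometric sequence. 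This step uses only the local integrability of $f$, and it is where the hypotheses are genuinely needed; together with the justification that the order-preserving iteration is legitimate for a merely locally integrable, possibly sign-changing $f$, this is the main technical point. Everything else is bookkeeping with Beta integrals. Letting $N\to\infty$ then gives $f(t) \le c_1 E_\rho\!\big(\kappa\Gamma(\rho) t^\rho\big)$.

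Finally I would invoke the classical large-argument bound $E_\rho(z)\le C\, e^{z^{1/\rho}}$, valid for all $\rho>0$ and $z\ge 0$ (which follows from the standard asymptotic $E_\rho(z)\sim \rho^{-1}e^{z^{1/\rho}}$ as $z\to\infty$ together with continuity of $E_\rho$ on $[0,\infty)$). Applying this with $z=\kappa\Gamma(\rho)t^\rho$ and using $(\kappa\Gamma(\rho))^{1/\rho} = (\Gamma(\rho))^{1/\rho}\kappa^{1/\rho}$ yields $f(t)\le c_1 C\exp\!\big((\Gamma(\rho))^{1/\rho}\kappa^{1/\rho}\, t\big)$, which is the asserted estimate with $c_2 = c_1 C$ and $c_3 = 1$.
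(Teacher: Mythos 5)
The paper does not prove this proposition at all: it is quoted verbatim from \cite{foodun-liu-omaba-2014} and used as a black box, so there is no internal argument to compare against. Judged on its own, your proof is correct. The monotone iteration $f\le c_1\sum_{n=0}^{N-1}T^n\mathbf{1}+T^Nf$ is legitimate since the kernel is nonnegative; the Beta-integral induction giving $(T^n\mathbf{1})(t)=(\kappa\Gamma(\rho)t^{\rho})^{n}/\Gamma(n\rho+1)$ and the $N$-fold kernel $\kappa^{N}\Gamma(\rho)^{N}(t-s)^{N\rho-1}/\Gamma(N\rho)$ is right; the remainder estimate uses exactly the local integrability of $f$; and the bound $E_\rho(z)\le Ce^{z^{1/\rho}}$ for real $z\ge0$ does hold for every $\rho>0$ (for $\rho\ge2$ the additional exponential terms in the asymptotics have strictly smaller real part; in this paper's applications $\rho<1$ anyway, e.g.\ $\rho=(\alpha-d\beta)/\alpha$). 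The one point you should make fully explicit is the measure-theoretic one you flagged: for a possibly sign-changing, merely locally integrable $f$, first check via Tonelli that $(T|f|)(t)<\infty$ for a.e.\ $t$ and that $T|f|$ is again locally integrable, which makes every iterate well defined and licenses Fubini when composing kernels; the conclusion then holds at every $t$ where the hypothesis integral converges absolutely.

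For comparison, the standard short proof of such renewal inequalities in this literature is a weighted-supremum argument rather than iteration: setting $M(\beta):=\sup_{t>0}e^{-\beta t}f(t)$ and using $\int_0^t(t-s)^{\rho-1}e^{-\beta(t-s)}\,\d s\le\Gamma(\rho)\beta^{-\rho}$, the hypothesis gives $M(\beta)\le c_1+\kappa\Gamma(\rho)\beta^{-\rho}M(\beta)$, and the choice $\beta=(2\kappa\Gamma(\rho))^{1/\rho}$ yields $f(t)\le 2c_1e^{\beta t}$, i.e.\ the assertion with $c_3=2^{1/\rho}$. That route is three lines but requires knowing a priori that $M(\beta)<\infty$ (usually supplied by the moment bounds from the Picard construction), whereas your iteration works under the bare hypotheses as stated and moreover produces the sharper intermediate bound $f(t)\le c_1E_\rho(\kappa\Gamma(\rho)t^{\rho})$, from which the exponential estimate follows with $c_3=1$. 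So your argument is somewhat longer but strictly more self-contained, and it is the natural companion to the paper's Lemma \ref{Lem:Green1}, where the same Mittag--Leffler function already appears.
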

Also we give the following converse.
\begin{proposition}[Proposition 2.6 in \cite{foodun-liu-omaba-2014}]
Let $\rho>0$ and suppose $f(t)$ is nonnegative,  locally integrable function satisfying
$$
f(t)\geq c_1+\kappa \int_0^t(t-s)^{\rho-1} f(s)\d s \ \ \mathrm{for \ all} \ \ t>0,
$$
where $c_1$ is some positive number. Then, we have
$$
f(t)\geq c_2\exp(c_3(\Gamma(\rho))^{1/\rho}\kappa^{1/\rho} t)\  \ \mathrm{for \ all} \ \ t>0,
$$ for some positive constants $c_2$ and $c_3$.
\end{proposition}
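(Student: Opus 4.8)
The plan is to iterate the integral inequality and recognize the resulting series as a Mittag-Leffler function, mirroring the argument behind the upper bound in Proposition \ref{prop:renewal-upper}. Since $f\geq0$ and $(t-s)^{\rho-1}\geq0$, the hypothesis immediately gives $f(t)\geq c_1$. Introduce the positive, order-preserving linear operator
$$
(Ag)(t):=\kappa\int_0^t(t-s)^{\rho-1}g(s)\,\d s,
$$
so that the assumption reads $f\geq c_1+Af$. Because $A$ preserves the order between nonnegative functions, I would apply $A$ repeatedly: from $f\geq c_1+Af$ one gets $Af\geq Ac_1+A^2f$, hence $f\geq c_1+Ac_1+A^2f$, and after $N$ steps
$$
f(t)\geq \sum_{n=0}^{N}(A^nc_1)(t)+(A^{N+1}f)(t)\geq \sum_{n=0}^{N}(A^nc_1)(t),
$$
the last inequality using $A^{N+1}f\geq0$. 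Letting $N\to\infty$ (the partial sums are nondecreasing) yields $f(t)\geq\sum_{n=0}^\infty (A^nc_1)(t)$, where all interchanges are justified by Tonelli's theorem since every integrand is nonnegative.

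The next step is to evaluate $A^nc_1$ explicitly. Using the Beta integral
$$
\int_0^t(t-s)^{\rho-1}s^{a-1}\,\d s=\frac{\Gamma(\rho)\Gamma(a)}{\Gamma(\rho+a)}\,t^{\rho+a-1},
$$
a straightforward induction on $n$ gives
$$
(A^nc_1)(t)=c_1\,\frac{\bigl(\kappa\Gamma(\rho)\bigr)^{n}}{\Gamma(n\rho+1)}\,t^{n\rho}.
$$
Summing over $n$ and comparing with the definition \eqref{ML-function} of the Mittag-Leffler function produces the clean lower bound
$$
f(t)\geq c_1\sum_{n=0}^\infty\frac{\bigl(\kappa\Gamma(\rho)t^\rho\bigr)^n}{\Gamma(n\rho+1)}=c_1\,E_\rho\!\bigl(\kappa\Gamma(\rho)t^\rho\bigr).
$$

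It remains to convert this into the stated exponential lower bound, and this is where the real work lies. I would invoke the standard asymptotics $E_\rho(z)\sim \rho^{-1}e^{z^{1/\rho}}$ as $z\to\infty$ (valid for $\rho\in(0,2)$; the leading growth order $e^{z^{1/\rho}}$ on the positive axis in fact persists for all $\rho>0$, as one checks directly from Stirling's formula applied to $\Gamma(n\rho+1)$). Since $E_\rho$ has nonnegative coefficients, it is nondecreasing and bounded below by $1$ on $[0,\infty)$, so the ratio $E_\rho(z)/e^{z^{1/\rho}}$ is continuous, strictly positive, and has a positive limit at infinity; hence it admits a positive infimum $c$, giving $E_\rho(z)\geq c\,e^{z^{1/\rho}}$ for all $z\geq0$. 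Applying this with $z=\kappa\Gamma(\rho)t^\rho$ yields
$$
f(t)\geq c_1 c\,\exp\!\bigl((\kappa\Gamma(\rho)t^\rho)^{1/\rho}\bigr)=c_1 c\,\exp\!\bigl((\Gamma(\rho))^{1/\rho}\kappa^{1/\rho}t\bigr),
$$
which is the assertion with $c_2=c_1c$ and $c_3=1$. The only genuine obstacle is the Mittag-Leffler lower estimate; the iteration and the Beta-integral computation are routine bookkeeping, and the nonnegativity of $f$ makes every interchange of sum and integral legitimate.
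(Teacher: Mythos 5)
Your proof is correct: the iteration gives $f(t)\geq c_1E_\rho(\kappa\Gamma(\rho)t^\rho)$ exactly as claimed, and the uniform bound $E_\rho(z)\geq c\,e^{z^{1/\rho}}$ on $[0,\infty)$ (via continuity, positivity, and the known asymptotics on the positive real axis) yields the statement with $c_3=1$. The paper itself offers no proof, importing the proposition from \cite{foodun-liu-omaba-2014}, and your renewal-iteration plus Mittag-Leffler argument is essentially the standard proof given in that reference, so the approaches coincide.
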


\begin{proposition}[Proposition 2.6 in \cite{foondun-tian-liu-2015}] \label{prop2.6-foondun}
Let $T< \infty$ and $\eta>0$. Suppose that $f(t)$ is a positive  locally integrable function satisfying
 \begin{equation}
 f(t)\geq c_2+ \kappa\int_0^t(t-s)^{\eta-1}f(s)ds\ \ \mathrm{for\ all} \ 0\leq t\leq T,
 \end{equation}
where $c_2$ is some positive number. Then for any $t\in (0, T]$, we have the following
$$
\liminf_{\kappa\to\infty}\frac{\log \log f(t)}{\log \lambda}\geq \frac1\eta.
$$
\end{proposition}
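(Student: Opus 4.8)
The plan is to reduce this double-logarithmic estimate to a genuine exponential lower bound for $f$ and then take logarithms twice. The exponential bound is already at hand: the hypothesis of this proposition is exactly the hypothesis of the converse renewal inequality stated just above (Proposition 2.6 of \cite{foodun-liu-omaba-2014}), read with $\rho=\eta$. Applying it, there are positive constants $C_1,C_2$ (depending only on $\eta$ and on the constant $c_2$ of the hypothesis) such that
\[
f(t)\geq C_1\exp\!\big(C_2(\Gamma(\eta))^{1/\eta}\,\kappa^{1/\eta}\,t\big)\qquad\text{for all }t\in(0,T].
\]
Here $\kappa$ is the parameter appearing in the integral inequality, and this is the same parameter written $\lambda$ in the conclusion; the limit is taken as this common parameter tends to infinity, so I read the displayed $\log\lambda$ as $\log\kappa$.

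Next I would fix $t\in(0,T]$ and take the logarithm of the bound above, obtaining
\[
\log f(t)\geq \log C_1+C_2(\Gamma(\eta))^{1/\eta}\,t\,\kappa^{1/\eta}.
\]
Since $t>0$, the $\kappa^{1/\eta}$ term dominates the additive constant as $\kappa\to\infty$, so there is $\kappa_0=\kappa_0(t)$ with $\log f(t)\geq \tfrac12 C_2(\Gamma(\eta))^{1/\eta}\,t\,\kappa^{1/\eta}>1$ for all $\kappa\geq\kappa_0$. A second logarithm then gives
\[
\log\log f(t)\geq \frac1\eta\log\kappa+\log\!\big(\tfrac12 C_2(\Gamma(\eta))^{1/\eta}\,t\big).
\]
Dividing by $\log\kappa>0$ and letting $\kappa\to\infty$, the bounded second term is $o(\log\kappa)$ and contributes nothing in the limit, which yields $\liminf_{\kappa\to\infty}(\log\log f(t))/\log\kappa\geq 1/\eta$, as required.

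Because the exponential lower bound is quoted, the argument above is essentially bookkeeping and I do not expect a real obstacle inside this proof; the only care needed is checking that $\kappa$ is large enough for both logarithms to have the right sign and that the lower-order terms do not degrade the liminf. The substantive content sits in the quoted renewal inequality, so the honest place to locate the difficulty is there. Were that input unavailable, the real work would be to establish the exponential rate directly by iterating $f\geq c_2+\kappa\,(k_\eta * f)$ with $k_\eta(t)=t^{\eta-1}$. Since the $n$-fold convolution is $k_\eta^{*n}(t)=\Gamma(\eta)^n\,t^{n\eta-1}/\Gamma(n\eta)$, iteration produces the Mittag-Leffler series
\[
f(t)\geq c_2\sum_{n=0}^\infty \frac{(\Gamma(\eta)\,\kappa\, t^\eta)^n}{\Gamma(n\eta+1)}=c_2\,E_\eta\!\big(\Gamma(\eta)\,\kappa\, t^\eta\big),
\]
and the known asymptotics $E_\eta(z)\sim \tfrac1\eta e^{z^{1/\eta}}$ as $z\to\infty$ (valid for $\eta\in(0,1)$) give $z^{1/\eta}=(\Gamma(\eta))^{1/\eta}\kappa^{1/\eta}t$, i.e. exactly the claimed rate $\exp(C_2\kappa^{1/\eta}t)$. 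This Mittag-Leffler route is where the exponent $1/\eta$ ultimately originates, and it is the step one would have to reprove if the cited proposition were removed.
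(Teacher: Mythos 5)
You cannot be matched against an internal proof here, because the paper does not prove this proposition: it is imported verbatim, with citation, as Proposition 2.6 of \cite{foondun-tian-liu-2015}, and is used as a black box in the white-noise lower bound. Judged on its own merits, your argument is correct, and it essentially reconstructs the standard proof of the cited result: obtain an exponential lower bound of the form $f(t)\geq C_1\exp\bigl(C_2(\Gamma(\eta))^{1/\eta}\kappa^{1/\eta}t\bigr)$, then the double logarithm and division by $\log\kappa$ is routine bookkeeping (and you are right that the $\lambda$ in the displayed conclusion is a typo for $\kappa$).

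One wrinkle deserves attention. The converse renewal inequality you invoke (Proposition 2.6 of \cite{foodun-liu-omaba-2014}, as restated in this paper) assumes the integral inequality for \emph{all} $t>0$, whereas here it is only assumed on $[0,T]$; applied strictly as a black box, its hypothesis is not literally met. Your fallback computation closes this gap and should really be promoted to the primary argument: the convolution iteration is local in time, since the $n$-fold iterate at time $t$ uses only the values of $f$ on $[0,t]\subseteq[0,T]$ (positivity of $f$ lets you discard the remainder at each stage), so
\begin{equation*}
f(t)\;\geq\; c_2\sum_{n=0}^{\infty}\frac{\bigl(\Gamma(\eta)\,\kappa\,t^{\eta}\bigr)^{n}}{\Gamma(n\eta+1)}\;=\;c_2\,E_\eta\bigl(\Gamma(\eta)\,\kappa\,t^{\eta}\bigr)
\end{equation*}
holds for every $t\in(0,T]$, and the asymptotic $E_\eta(z)\sim\eta^{-1}e^{z^{1/\eta}}$ as $z\to+\infty$ gives exactly the rate $\exp\bigl((\Gamma(\eta))^{1/\eta}\kappa^{1/\eta}t\bigr)$. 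Your restriction of that asymptotic to $\eta\in(0,1)$ is immaterial: for real $z\to+\infty$ the leading exponential term is the same for all $\eta>0$, and in this paper's application $\eta=(\alpha-d\beta)/\alpha\in(0,1)$ in any case. With the local iteration taken as the main line rather than the fallback, the proof is complete and matches the source of the quoted proposition.
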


\begin{lemma}[Lemma 2.4 in \cite{foondun-tian-liu-2015}]\label{lower-bound-lambda}
Let $\rho>0$  and  $S(t)=\sum_{k=1}^\infty\left(\frac{t}{k^\rho}\right)^k$. For any fixed $t>0$, we have
$$
\liminf_{\theta\to\infty}\frac{\log \log  S(\theta t)}{\log \theta}\geq \frac1\rho.
$$

\end{lemma}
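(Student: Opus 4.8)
The plan is to estimate the series $S(\theta t)=\sum_{k=1}^\infty\left(\frac{\theta t}{k^\rho}\right)^k$ from below by a single, well-chosen term and then track how fast that term grows with $\theta$. Since all summands are nonnegative, for every index $n\geq 1$ one has the trivial bound $S(\theta t)\geq\left(\frac{\theta t}{n^\rho}\right)^n$. A quick heuristic (maximizing $k\log(\theta t)-\rho k\log k$ over $k$) shows that the dominant term sits at an index of order $(\theta t)^{1/\rho}$, which is exactly the scale responsible for the exponent $1/\rho$ in the statement.

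Following this heuristic, I would set $n=n(\theta):=\lfloor(\theta t/2)^{1/\rho}\rfloor$, which is a positive integer once $\theta$ is large enough. For this choice we have $n^\rho\leq\theta t/2$, so $\frac{\theta t}{n^\rho}\geq 2$ and therefore
$$
S(\theta t)\geq\left(\frac{\theta t}{n^\rho}\right)^n\geq 2^n.
$$
Taking logarithms and using $n\geq(\theta t/2)^{1/\rho}-1$ yields
$$
\log S(\theta t)\geq n\log 2\geq\left((\theta t/2)^{1/\rho}-1\right)\log 2,
$$
so $\log S(\theta t)$ is bounded below by a positive constant multiple of $\theta^{1/\rho}$ for all large $\theta$.

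To finish, I take logarithms once more. From the last display,
$$
\log\log S(\theta t)\geq\log\left(\left((\theta t/2)^{1/\rho}-1\right)\log 2\right)=\frac{1}{\rho}\log\theta+O(1)\qquad(\theta\to\infty),
$$
since the $-1$ and the multiplicative constants contribute only additive $O(1)$ terms after the outer logarithm. Dividing by $\log\theta$ and letting $\theta\to\infty$ then gives $\liminf_{\theta\to\infty}\frac{\log\log S(\theta t)}{\log\theta}\geq\frac{1}{\rho}$, as required.

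The argument is essentially a one-term lower bound, so no serious analytic obstacle arises; the only point requiring care is the selection of the summation index $n\asymp\theta^{1/\rho}$, which must be tuned so that the retained term grows exponentially in $\theta^{1/\rho}$ (here guaranteed by the ratio $\theta t/n^\rho\geq 2$). Any index of this order produces the same leading behavior, and only the lower bound is needed, so the fact that the $\liminf$ (rather than a genuine limit) is asserted causes no difficulty.
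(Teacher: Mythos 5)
Your proof is correct, and there is nothing in this paper to compare it against: the lemma is imported verbatim as Lemma 2.4 of the cited reference \cite{foondun-tian-liu-2015}, with no proof given here. Your argument --- bounding the nonnegative series below by the single term at index $n=\lfloor(\theta t/2)^{1/\rho}\rfloor$ so that the retained term is at least $2^n$ and hence $\log\log S(\theta t)\geq \frac{1}{\rho}\log\theta+O(1)$ --- is the standard proof of this fact and is essentially the argument in that reference, so nothing further is needed.
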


\section{Proofs for the white noise case.}

\subsection{Proofs of Theorem \ref{white:upperbound}.}
\begin{proof}
The proof follows main steps in  \cite{foondun-tian-liu-2015} and \cite{FooNane} with some crucial changes.
We first show the existence of a unique solution. This follows from a standard Picard iteration; see \cite{walsh}, so we just briefly spell out the main ideas. For more information, see \cite{nane-mijena-2014}. Set
\begin{equation*}
u_t^{(0)}(x):=(\sG_B u_0)_t(x)
\end{equation*}
and
\begin{equation*}
u_t^{(n+1)}(x):=(\sG_B u_0)_t(x)+\lambda\int_0^t\int_{B(0,R)}G_B({t-s},x,y)\sigma(u^{(n)}_s(y))W(\d y\,\d s)\quad\text{for}\quad n\geq 0.
\end{equation*}
Define $D_n(t\,,x):=\E|u^{(n+1)}_t(x)-u^{(n)}_t(x)|^2$ and $H_n(t):=\sup_{x\in \R^d}D_n(t\,,x)$. We will prove the result for $t\in [0,\,T]$, where $T$ is some fixed number. We now use this notation, \eqref{bounded-free-upper-bound},  together with Walsh's isometry and the assumption on $\sigma$ to write
\begin{equation*}
\begin{aligned}
D_n(t,\,x)&=\lambda^2\int_0^t\int_{B(0,R)}G^2_B({t-s}, x, y)\E|\sigma(u^{(n)}_s(y))-\sigma(u^{(n-1)}_s(y))|^2\d y\,\d s\\
&\leq \lambda^2L_\sigma^2\int _0^tH_{n-1}(s)\int_{\R^d}G^2({t-s},x,y)\,\d y\,\d s\\
&\leq \lambda^2L_\sigma^2\int_0^T\frac{H_{n-1}(s)}{(t-s)^{d\beta/\alpha}}\,\d s
\end{aligned}
\end{equation*}
We therefore have
\begin{equation*}
H_{n}(t)\leq \lambda^2L_\sigma^2\int_0^T\frac{H_{n-1}(s)}{(t-s)^{d\beta/\alpha}}\,\d s.
\end{equation*}
We now note that the integral appearing on the right hand side of the above display is finite when $d<\alpha/\beta$. Hence, by Lemma 3.3 in Walsh \cite{walsh}, the series $\sum_{n=0}^\infty H^{\frac12}_n(t)$ converges uniformly on $[0,\,T].$ Therefore, the sequence $\{u_n\}$ converges  in $L^2$ and uniformly on $[0,\,T]\times\R^d$ and the limit satisfies (\ref{mild-sol-white}).  We can prove uniqueness in a similar way.
We now turn to the proof of the exponential bound. From Walsh's isometry, we have
\begin{equation*}
\E|u_t(x)|^2=|(\sG_B u_0)_t(x)|^2+\lambda^2\int_0^t\int_{B(0,R)}G^2_B({t-s},x,y)\E|\sigma(u_s(y))|^2\d y\,\d s.
\end{equation*}
Since we are assuming that the initial  function (condition) is bounded, we have that $|(\sG_B u_0)_t(x)|^2\leq c_1$ and by \eqref{bounded-free-upper-bound} the second term is bounded by
\begin{equation*}
\begin{aligned}
\lambda^2L_\sigma^2\int_0^t\int_{B(0,R)}&G^2_B({t-s}, x, y)\E|u_s(y)|^2\d y\,\d s\\
&\leq c_1\lambda^2L_\sigma^2\int_0^t\frac{1}{(t-s)^{d\beta/\alpha}}\sup_{y\in B(0,R)}\E|u_s(y)|^2\d y\,\d s.
\end{aligned}
\end{equation*}
We therefore have
\begin{equation*}
\sup_{x\in B(0,R)}\E|u_s(x)|^2\leq c_1+c_2\lambda^2L_\sigma^2\int_0^t\frac{1}{(t-s)^{d\beta/\alpha}}\sup_{y\in B(0,R)}\E|u_s(y)|^2\,\d s.
\end{equation*}
The renewal inequality in Proposition \ref{prop:renewal-upper} with $\rho=(\alpha-d\beta)/\alpha$  proves the result.
\end{proof}

\subsection{Proof of Theorem \ref{main-thm-white-noise}.}
Set $\mathcal{S}_t(\lambda):=\sup_{x\in B(0,R)}\E  |u_t(x)|^2$.
We first state following proposition which follows from Theorem \ref{white:upperbound}.

\begin{proposition}
Fix $t>0$, then
$$
\limsup_{\lambda\to\infty}\frac{\log \log \mathcal{S}_t(\lambda)}{\log \lambda}\leq \frac{2\alpha}{\alpha-\beta d}.
$$
\end{proposition}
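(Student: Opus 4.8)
The plan is to deduce this upper bound directly from the exponential moment estimate already established in Theorem \ref{white:upperbound}. Recall that theorem gives, for all $t>0$,
\begin{equation*}
\mathcal{S}_t(\lambda)=\sup_{x\in B(0,R)}\E|u_t(x)|^2\leq c_1 e^{c_2\lambda^{\frac{2\alpha}{\alpha-d\beta}}t},
\end{equation*}
with $c_1,c_2>0$ constants not depending on $\lambda$. The entire content of the proposition is to convert this bound into the stated iterated-logarithm limit, so the argument is essentially a two-step computation of logarithms followed by a limit.

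First I would take logarithms of the upper bound. Since both sides are positive, this gives
\begin{equation*}
\log \mathcal{S}_t(\lambda)\leq \log c_1 + c_2 t\,\lambda^{\frac{2\alpha}{\alpha-d\beta}}.
\end{equation*}
Next, taking logarithms again, I would use the fact that for $\lambda$ large the dominant term on the right is the exponential one, so $\log\mathcal{S}_t(\lambda)\leq C\lambda^{\frac{2\alpha}{\alpha-d\beta}}$ for some constant $C=C(t)$ and all $\lambda$ sufficiently large. Hence
\begin{equation*}
\log \log \mathcal{S}_t(\lambda)\leq \log C + \frac{2\alpha}{\alpha-d\beta}\log\lambda
\end{equation*}
for $\lambda$ large enough (one must first check that $\mathcal{S}_t(\lambda)>1$ so that $\log\log$ is well defined, which holds for large $\lambda$ since the right-hand side of the first moment bound tends to infinity). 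Dividing by $\log\lambda$ and letting $\lambda\to\infty$, the term $\log C/\log\lambda$ vanishes and one obtains
\begin{equation*}
\limsup_{\lambda\to\infty}\frac{\log\log\mathcal{S}_t(\lambda)}{\log\lambda}\leq \frac{2\alpha}{\alpha-d\beta},
\end{equation*}
which is the claim.

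There is no serious obstacle here; the only point requiring a modicum of care is the handling of the additive constant $\log c_1$ inside the inner logarithm. One cannot simply absorb it before taking the second logarithm, so I would first note that for all $\lambda$ beyond some threshold $\lambda_0$ the exponential term exceeds, say, $2\log c_1$ (assuming $c_1\geq 1$; otherwise the bound is even easier), whence $\log\mathcal{S}_t(\lambda)\leq \log c_1 + c_2 t\,\lambda^{\frac{2\alpha}{\alpha-d\beta}}\leq 2 c_2 t\,\lambda^{\frac{2\alpha}{\alpha-d\beta}}$. This linearization of the inner logarithm is exactly what lets the second logarithm produce the correct exponent as the leading term, with everything else contributing only $o(\log\lambda)$. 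The matching lower bound, giving the full limit asserted in Theorem \ref{main-thm-white-noise}, will require the genuinely harder work involving Proposition \ref{density-lower-bound-killed-fractional} and the renewal lower bound of Proposition \ref{prop2.6-foondun}, but for this proposition the upper estimate alone suffices.
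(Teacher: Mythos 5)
Your proposal is correct and is exactly the argument the paper intends: the paper states this proposition without further detail as an immediate consequence of Theorem \ref{white:upperbound}, and your two-step logarithm computation (with the careful absorption of the additive constant $\log c_1$ before the second logarithm) is the standard way to fill in that deduction. No gaps worth noting; the minor point about $\mathcal{S}_t(\lambda)>1$ for large $\lambda$ is handled adequately.
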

 For any fixed $\epsilon>0$, set
 $$
 \mathcal{I}_{\epsilon, t}(\lambda):=\inf_{x\in B(0,R-\epsilon)} \E|u_t(x)|^2.
 $$
 Next we give a proposition that gives the lower bound in Theorem \ref{main-thm-white-noise}
\begin{proposition}
 For any fixed $\epsilon>0$, there exists a $t_0>0$ such that for all $t\leq t_0$,
$$
\liminf_{\lambda\to\infty}\frac{\log \log \mathcal{I}_{\epsilon, t}(\lambda)}{\log \lambda}\geq \frac{2\alpha}{\alpha-\beta d}.
$$
\end{proposition}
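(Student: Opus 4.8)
The plan is to derive a renewal-type lower bound inequality for $\mathcal{I}_{\epsilon,t}(\lambda)$ and then feed it into Proposition \ref{prop2.6-foondun} (equivalently, iterate and invoke Lemma \ref{lower-bound-lambda}). First I would apply the Walsh--Dalang isometry to the mild formulation \eqref{mild-sol-white}. Since the stochastic integral has mean zero and is orthogonal to the deterministic term, this yields for every $x\in B(0,R)$
\begin{equation*}
\E|u_t(x)|^2=|(\mathcal{G}_Bu_0)_t(x)|^2+\lambda^2\int_0^t\int_{B(0,R)}G_B^2(t-s,x,y)\,\E|\sigma(u_s(y))|^2\,\d y\,\d s.
\end{equation*}
Invoking Assumption \ref{lowerbound}, namely $\sigma(z)\ge l_\sigma|z|$ (so that $\E|\sigma(u_s(y))|^2\ge l_\sigma^2\,\E|u_s(y)|^2$), together with the deterministic lower bound $(\mathcal{G}_Bu_0)_t(x)\ge g_t>0$ from Remark \ref{lower-bound-frac-int-rep}, I keep both nonnegative contributions on the right-hand side.

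Next comes the spatial localization, which is the heart of the argument. Fixing $x\in B(0,R-\epsilon)$, I restrict the $y$-integral to the set $A_s:=\{y\in B(0,R-\epsilon):|x-y|<(t-s)^{\beta/\alpha}\}$. On $A_s$ all hypotheses of Proposition \ref{density-lower-bound-killed-fractional} hold once $t\le t_0$ (indeed $x,y\in B(0,R-\epsilon)$, $|x-y|<(t-s)^{\beta/\alpha}$ and $t-s<t_0$), so $G_B(t-s,x,y)\ge C(t-s)^{-\beta d/\alpha}$ and hence $G_B^2\ge C^2(t-s)^{-2\beta d/\alpha}$. Bounding $\E|u_s(y)|^2\ge \mathcal{I}_{\epsilon,s}(\lambda)$ for $y\in A_s\subseteq B(0,R-\epsilon)$ gives
\begin{equation*}
\E|u_t(x)|^2\ge g_t^2+\lambda^2 l_\sigma^2 C^2\int_0^t(t-s)^{-2\beta d/\alpha}\,|A_s|\,\mathcal{I}_{\epsilon,s}(\lambda)\,\d s.
\end{equation*}

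The remaining geometric input, and the step I expect to be the main obstacle, is a uniform lower bound $|A_s|\ge c\,(t-s)^{\beta d/\alpha}$. This is exactly where the smallness of $t$ enters: since $B(0,R-\epsilon)$ is convex with smooth boundary, for any $x\in B(0,R-\epsilon)$ and any radius $r\le r_0$ (with $r_0$ depending only on $R-\epsilon$ and $d$) one has $|B(x,r)\cap B(0,R-\epsilon)|\ge c\,\omega_d r^d$; choosing $t_0$ so small that $t_0^{\beta/\alpha}\le r_0$ forces $(t-s)^{\beta/\alpha}\le r_0$ for all $0\le s\le t\le t_0$, so the estimate holds uniformly in $x$ and $s$. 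The subtlety is that for large radii the intersection saturates at the fixed volume $|B(0,R-\epsilon)|$ and the power-law bound fails, which is precisely why a threshold $t_0$ must be introduced. Substituting the measure bound collapses the prefactor to $(t-s)^{-\beta d/\alpha}$, and taking the infimum over $x\in B(0,R-\epsilon)$ on the left (the right-hand side being independent of $x$) yields
\begin{equation*}
\mathcal{I}_{\epsilon,t}(\lambda)\ge g_t^2+c\lambda^2\int_0^t(t-s)^{-\beta d/\alpha}\,\mathcal{I}_{\epsilon,s}(\lambda)\,\d s,\qquad t\le t_0.
\end{equation*}

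Finally I would invoke Proposition \ref{prop2.6-foondun} with $\eta=(\alpha-\beta d)/\alpha>0$ (positive because $d<\alpha/\beta$) and $\kappa=c\lambda^2$; the positivity and local integrability of $s\mapsto\mathcal{I}_{\epsilon,s}(\lambda)$ required to apply it follow from Theorem \ref{white:upperbound} and $g_t>0$. Equivalently, iterating the inequality expresses the right-hand side as a Mittag-Leffler-type power series comparable to $S(\theta)$ from Lemma \ref{lower-bound-lambda} with $\rho=\eta$ and $\theta$ proportional to $\kappa=c\lambda^2$. Since $\log\theta\sim 2\log\lambda$, carefully tracking this $\lambda^2$ scaling converts the exponent $1/\eta$ produced by the renewal lemma into $2/\eta=\tfrac{2\alpha}{\alpha-\beta d}$, which is exactly the asserted lower bound for $\liminf_{\lambda\to\infty}\tfrac{\log\log\mathcal{I}_{\epsilon,t}(\lambda)}{\log\lambda}$.
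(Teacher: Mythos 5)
Your proposal is correct and follows essentially the same path as the paper's own proof: the Walsh isometry decomposition, the lower bound $g_{t_0}^2$ on the deterministic term via Remark \ref{lower-bound-frac-int-rep}, the restriction to the set $A_s=\{y\in B(0,R-\epsilon):|x-y|\leq (t-s)^{\beta/\alpha}\}$ with $|A_s|\geq c(t-s)^{d\beta/\alpha}$ combined with the heat kernel lower bound of Proposition \ref{density-lower-bound-killed-fractional}, and finally the renewal inequality fed into Proposition \ref{prop2.6-foondun} with $\eta=(\alpha-\beta d)/\alpha$. Your explicit discussion of why the measure bound needs small $t_0$ and of how the $\kappa\sim\lambda^2$ scaling converts $1/\eta$ into $2\alpha/(\alpha-\beta d)$ merely spells out details the paper leaves implicit.
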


\begin{proof}
The proof of  the proposition  will rely on the following observation. From Walsh isometry, we have
\begin{equation*}
\begin{split}
\E|u_t(x)|^2&=|(\sG_B u_0)_t(x)|^2+\lambda^2\int_0^t\int_{B(0,R)}G^2_B({t-s}, x,y)\E|\sigma(u_s(y))|^2\d y\,\d s.\\
&=I_1+I_2.
\end{split}
\end{equation*}
 We fix $\epsilon >0$ and choose a $t_0$  as in Proposition  \ref{density-lower-bound-killed-fractional}.

 For  $x\in B(0,R-\epsilon)$ we have $\mathcal{G}_B(t,x)\geq g_{t_0}$ by Remark \ref{lower-bound-frac-int-rep}.
 Hence $I_1\geq g_{t_0}^2.$ We now prove the lower bound for $I_2$.
 \begin{equation*}
 \begin{split}
 I_2&\geq (\lambda l_\sigma)^2 \int_0^t\int_{B(0,R)}G^2_B({t-s}, x,y)\E|u_s(y))|^2\d y\,\d s\\
 &\geq (\lambda l_\sigma)^2 \int_0^t \mathcal{I}_{\epsilon, s}(\lambda) \int_{B(0,R-\epsilon)}G^2_B({t-s}, x,y)\d y\,\d s.
 \end{split}
 \end{equation*}

 Set $A:=\{y\in B(0, R-\epsilon): \ |x-y|\leq (t-s)^{\beta/\alpha}\}$. Since $t-s\leq t_0$, we have
 $|A|\geq c_1 (t-s)^{d\beta/\alpha}$. Now using Proposition \ref{density-lower-bound-killed-fractional}, we have
 \begin{equation*}
 \begin{split}
 \int_{B(0,R-\epsilon)}G^2_B({t-s}, x,y)\d y&\geq c_2\int_A\frac{1}{(t-s)^{2\beta d/\alpha}}\\
 &=c_3\frac{1}{(t-s)^{\beta d/\alpha}}.
 \end{split}
 \end{equation*}
 We thus have
 $$
 I_2\geq  c_4 \lambda^2 \int_{0}^t  \frac{\mathcal{I}_{\epsilon, s}(\lambda) }{(t-s)^{\beta d/\alpha}}ds.
  $$

  Combining the above estimates we have
  $$
  \mathcal{I}_{\epsilon, t}(\lambda)\geq g_{t_0}^2+c_4 \lambda^2 \int_{0}^t  \frac{\mathcal{I}_{\epsilon, s}(\lambda) }{(t-s)^{\beta d/\alpha}}ds.
  $$

  We now apply Proposition \ref{prop2.6-foondun}.

 \end{proof}

 \begin{proof}[Proof of Theorem \ref{main-thm-white-noise}]

 The proof of the result when $t\leq t_0$ follows from the two propositions above. To prove the theorem for all $t>0$, we only need to prove the above proposition for all $t>0$. For any fixed $T, t>0$, by  changing variables we have
 \begin{equation*}
 \begin{split}
\E|u_{t+T}(x)|^2&\geq |(\sG_B u_0)_{t+T}(x)|^2+\lambda^2 \int_0^{t+T}\int_{B(0,R)}G^2_B({t+T-s},x,y)\E|\sigma(u_{s}(y))|^2\d y\,\d s\\
&\geq |(\sG_B u_0)_{t+T}(x)|^2+\lambda^2 \int_0^T\int_{B(0,R)}G^2_B({t+T-s},x,y)\E|\sigma(u_{s}(y))|^2\d y\,\d s\\
&+\lambda^2\int_0^t\int_{B(0,R)}G^2_B({t+T-s},x,y)\E|\sigma(u_{s+T}(y))|^2\d y\,\d s.
\end{split}
\end{equation*}
This gives
$$
\E|u_{t+T}(x)|^2\geq |(\sG_B u_0)_{t+T}(x)|^2+\lambda^2l_\sigma^2\int_0^t\int_{B(0,R)}G^2_B({t+T-s},x,y)\E|u_{s+T}(y)|^2\d y\,\d s,
$$
 since $|(\sG_B u_0)_{t+T}(x)|^2$ strictly positive, we can use the proof of the above proposition  with an obvious modification to conclude that
 $$
 \liminf_{\lambda\to\infty} \frac{\log\log \E|u_{t+T}(x)|^2}{\log \lambda}\geq \frac{2\alpha}{\alpha-\beta d},
 $$
  for $x\in B(0, R-\epsilon)$ and small $t$.
 \end{proof}

 \begin{proof}[Proof of Corollary \ref{corollary-excitation}]
 Note that
 $$
 \int_{B(0, R)}\E|u_t(x)|^2dx\leq C R^d\sup_{x\in B(0,R)}\E|u_t(x)|^2,
 $$
 and
 $$
  \int_{B(0, R)}\E|u_t(x)|^2dx\geq C (R-\epsilon)^d\inf_{x\in B(0,R-\epsilon)}\E|u_t(x)|^2.
 $$
 We now apply Theorem \ref{main-thm-white-noise} and use the definition of $\mathcal{E}_t(\lambda)$ to obtain the result.
 \end{proof}

\section{Proofs for the colored noise case.}
\subsection{ Proof of Theorem \ref{thm-colored-noise}.}
\begin{proof}
The proof of existence and uniqueness is standard as in  \cite{foondun-tian-liu-2015} and \cite{FooNane}. We give the details for the convenience of the reader.  For more information, see \cite{walsh}.  We set
\begin{equation*}
u^{(0)}(t,\,x):=(\sG_B u_0)_t(x),
\end{equation*}
and
\begin{equation*}
u^{(n+1)}(t,\,x):=(\sG_B u_0)_t(x)+\lambda\int_0^t\int_{B(0,R)}G_B({t-s}, x, y)\sigma(u^{(n)}(s,\,y))F(\d y\,\d s), \quad n\geq 0.
\end{equation*}
Define $D_n(t\,,x):=\E|u^{(n+1)}(t,\,x)-u^{(n)}(t,\,x)|^2$,  $H_n(t):=\sup_{x\in \R^d}D_n(t\,,x)$ and $\Sigma(t,y,n)=|\sigma(u^{(n)}(t,\,y))-\sigma(u^{(n-1)}(t,\,y))|$. We will prove the result for $t\in [0,\,T]$ where $T$ is some fixed number. We now use this notation together with the covariance formula \eqref{covariance-colored} and the assumption on $\sigma$ to write
\begin{equation*}
\begin{aligned}
&D_n(t,\,x)  \\
&=\lambda^2\int_0^t\int_{B(0,R)}\int_{B(0,R)} G_B({t-s}, x,y)G_B({t-s}, x, z)\E[\Sigma(s,y,n) \Sigma(s,z,n)] f(y,z)\d y d z \d s.
\end{aligned}
\end{equation*}
Now we estimate the expectation on the right hand side using Cauchy-Schwartz inequality.
\begin{equation*}
\begin{aligned}
\E[\Sigma(s,y,n) \Sigma(s,z,n)] &\leq L_\sigma^2\E|u^{(n)}(s,\,y)-u^{(n-1)}(s,\,y)||u^{(n)}(s,\,z)-u^{(n-1)}(s,\,z)|\\
&\leq L_\sigma^2\bigg(\E|u^{(n)}(s,\,y)-u^{(n-1)}(s,\,y)|^2\bigg)^{1/2}\\
&\ \ \ \ \bigg(\E|u^{(n)}(s,\,z)-u^{(n-1)}(s,\,z)| ^2\bigg)^{1/2}\\
&\leq L_\sigma^2 \bigg( D_{n-1}(s,y) D_{n-1}(s,z)\bigg)^{1/2}\\
&\leq L_\sigma^2 H_{n-1}(s).
\end{aligned}
\end{equation*}
Hence we have for $\gamma<\alpha$ using  Lemma  \ref{lemma:covariance-upper-bound}
\begin{equation*}
\begin{aligned}
&D_n(t,\,x)  \\
&\leq \lambda^2L_\sigma^2\int_0^t H_{n-1}(s)\int_{B(0,R)}\int_{B(0,R)} G_B({t-s}, x,y)G_B(,x , z) f(y,z)\d y \d z \,\d s\\
&\leq \lambda^2L_\sigma^2\int_0^t H_{n-1}(s)\int_{\R^d}\int_\rd G_{t-s}(x-y)G_{t-s}(x-z) f(y,z)\d y \d z \,\d s\\
&\leq c_1 \lambda^2L_\sigma^2\int _0^t \frac{H_{n-1}(s)}{(t-s)^{\gamma \beta/\alpha}}\,\d s.
\end{aligned}
\end{equation*}
We therefore have
\begin{equation*}
H_{n}(t)\leq c_1 \lambda^2L_\sigma^2\int _0^t \frac{H_{n-1}(s)}{(t-s)^{\gamma \beta/\alpha}}\,\d s.
\end{equation*}
We now note that the integral appearing on the right hand side of the above display is finite when $\gamma<\alpha/\beta$. Hence, by Lemma 3.3 in Walsh \cite{walsh}, the series $\sum_{n=0}^\infty H^{\frac12}_n(t)$ converges uniformly on $[0,\,T].$ Therefore, the sequence $\{u_n\}$ converges  in $L^2$ and uniformly on $[0,\,T]\times\R^d$ and the limit satisfies (\ref{mild-sol-colored}).  We can prove uniqueness in a similar way.

We now turn to the proof of the exponential bound. Set
$$
A(t):=\sup_{x\in\rd}\E|u_t(x)|^2.
$$
We claim that
there exist constants $c_4, c_5$ such that  for all $t>0, $ we have
$$
A(t)\leq c_4+ c_5(\lambda L_\sigma)^2\int_0^t \frac{A(s)}{(t-s)^{\beta \gamma/\alpha}}\,\d s.
$$
The renewal inequality in Proposition \ref{prop:renewal-upper} with $\rho=(\alpha-\gamma\beta)/\alpha$  then proves the exponential upper bound. To prove this claim, we start with the mild formulation given by \eqref{mild-sol-colored}, then take the second moment to obtain the following
\begin{equation}
\begin{split}
&\E|u_t(x)|^2=|(\mathcal{G}_B u)_t(x)|^2\\
&\ \ \ \ +\lambda^2 \int_0^t\int_{B\times B}G_B({t-s}, x,y)G_B({t-s}, x,z)f(y,z)\E[\sigma(u_s(y))\sigma(u_s(z))]\d y\d z\d s\\
&\ \ \ \ =I_1+I_2.
\end{split}
\end{equation}

Since $u_0$ is bounded, we have $I_1\leq c_4$. Next we use the assumption on $\sigma$ together with H\"older's inequality to see that
\begin{equation}
\begin{split}
\E[\sigma(u_s(y))\sigma(u_s(z))]&\leq L_\sigma^2 \E[u_s(y)u_s(z)]\\
&\leq  L_\sigma^2[\E|u_s(y)|^2]^{1/2} [\E|u_s(z)|^2]^{1/2}\\
&\leq L_\sigma^2\sup_{x\in \rd}\E|u_s(x)|^2.
\end{split}
\end{equation}
Therefore, using Lemma \ref{lemma:covariance-upper-bound} the second term $I_2$ is thus bounded as follows.
$$
I_2\leq c_5 (\lambda L_\sigma)^2\int_0^t\frac{A(s)}{(t-s)^{\beta\gamma/\alpha}}\, \d s.
$$
Combining the above estimates, we obtain the required result in the claim.

\end{proof}
\subsection{ Proof of  Theorem \ref{excitation-colored}.}
The proof is inspired by the methods in \cite{foondun-tian-liu-2015} and \cite{FooNane}. Set $B=B(0,R)$ and $B_\epsilon=B(0,R-\epsilon)$. We will use the following notation $B^2=B \times B$ and $B_\epsilon^2=B_\epsilon\times B_\epsilon$. The starting point of the proof of the lower bound hinges on the following recursive argument.
\begin{equation*}
\begin{aligned}
\E|u_{t+\tilde{t}}&(x)|^2\\
&=|(\sG_B u)_{t+\tilde{t}}(x)|^2+\lambda^2 \int_0^{t+\tilde{t}}\int_{B^2}\\
& G_B(t+\tilde{t}-s_1,\,x,\,z_1)G_B(t+\tilde{t}-s_1,\,x,\,z_1')\E[\sigma(u_{s_1}(z_1))\sigma(u_{s_1}(z_1'))f(z_1,z_1')]\d z_1\d z_1'\d s_1.
\end{aligned}
\end{equation*}
We now use the assumption that $\sigma(x)\geq l_\sigma|x|$ for all $x$ together with a change of variable to reduce the above to
 \begin{equation*}
\begin{aligned}
\E|u_{t+\tilde{t}}&(x)|^2\\
&\geq|(\sG_B u)_{t+\tilde{t}}(x)|^2+\lambda^2 l_\sigma^2\int_0^{t}\int_{B^2}\\
& G_B(t-s_1,\,x,\,z_1)G_B(t-s_1,\,x,\,z_1')\E|u_{s_1+\tilde{t}}(z_1)u_{s_1+\tilde{t}}(z_1')|f(z_1,z_1')\d z_1\d z_1'\d s_1.
\end{aligned}
\end{equation*}

We also have
\begin{equation*}
\begin{aligned}
&\E|u_{s_1+\tilde{t}}(z_1)u_{s_1+\tilde{t}}(z_1')|\geq|(\sG_B u)_{s_1+\tilde{t}}(z_1)(\sG_B u)_{s_1+\tilde{t}}(z_1)|+\lambda^2l_\sigma^2\int_0^{s_1}\int_{B^2} \\
&G_B(s_1-s_2,\,z_1,\,z_2)G_B(s_1-s_2,\,z_1',\,z_2')\E|u_{s_2+\tilde{t}}(z_2)u_{s_2+\tilde{t}}(z_2')|f(z_2, z_2')\d z_2\d z_2'\d s_2.
\end{aligned}
\end{equation*}

We set $z_0=z_0':=x$ and $s_0:=t$ and continue the recursion as above to obtain
\begin{equation}\label{recursion}
\begin{aligned}
\E|u_{t+\tilde{t}}&(x)|^2\\
&\geq|(\sG_B u)_{t+\tilde{t}}(x)|^2\\
&+ \sum_{k=1}^\infty (\lambda l_\sigma)^{2k}\int_0^t\int_{B^2}\int_0^{s_1}\int_{B^2}\cdots \int_0^{s_{k-1}}\int_{B^2} |(\sG_B u)_{s_k+\tilde{t}}(z_k)(\sG_B u)_{s_k+\tilde{t}}(z_k')|\\
&\prod_{i=1}^kG_B(s_{i-1}-s_{i}, z_{i-1},\,z_i)G_B(s_{i-1}-s_{i}, z'_{i-1},\,z'_i)f(z_i, z_i') \d z_i\d z_i'\d s_i.
\end{aligned}
\end{equation}

\begin{proposition}\label{colorednoiselowbd}
 Fix $\epsilon>0$. Then for all $x\in B(0, R-\epsilon)$ and $0\leq t\leq t_0$
\begin{equation*}
\E|u_{t+\tilde{t}}(x)|^2\geq g_t^2+ g_t^2\sum_{k=1}^\infty \left(\lambda^2l_\sigma^2c_1 \right)^k\left(\frac{t}{k}\right)^{k(\alpha-\gamma \beta )/\alpha},
\end{equation*}
where $c_1$ is a positive constant depending on $\alpha$ and $\gamma$ and $\tilde{t}>0$ is a fixed constant.
\end{proposition}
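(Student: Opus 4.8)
The plan is to start from the recursive lower bound \eqref{recursion} and to extract, term by term, a clean lower bound on the $k$-fold space--time integral. The $k=0$ contribution $|(\sG_B u)_{t+\tilde t}(x)|^2$ is bounded below by $g_t^2$ directly from Remark \ref{lower-bound-frac-int-rep} (taking $\tilde t$ to play the role of the constant $t_0$ there), which already produces the leading $g_t^2$. Everything then reduces to showing that the generic summand is at least $g_t^2(c_1\lambda^2 l_\sigma^2)^k (t/k)^{k(\alpha-\gamma\beta)/\alpha}$ with a constant $c_1$ independent of $k$.

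To do this I would not integrate over the full domain but restrict it in two coordinated ways. First, restrict the time variables so that each consecutive gap $\theta_i:=s_{i-1}-s_i$ lies in $[t/(2k),\,t/k]$; this is feasible (the gaps sum to at most $t$, so $s_k\ge0$), it forces all $\theta_i$ to be comparable to $t/k$, and the corresponding region of the time simplex has measure at least $(t/2k)^k$. Second --- and this is the key device --- restrict every spatial variable $z_i,z_i'$ to a single small ball $B(x,r)$ with $r\asymp (t/k)^{\beta/\alpha}$, chosen so that $2r\le\theta_i^{\beta/\alpha}$ for all $i$. Keeping all points in one common ball guarantees $|z_{i-1}-z_i|\le 2r<\theta_i^{\beta/\alpha}$ along the whole chain, so Proposition \ref{density-lower-bound-killed-fractional} applies at every level and gives $G_B(\theta_i,z_{i-1},z_i)\ge C\theta_i^{-\beta d/\alpha}$ with a bound uniform in the spatial positions.

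On this restricted region the integrand factorizes. The kernel factors contribute $\prod_{i=1}^k C^2\theta_i^{-2\beta d/\alpha}$, the terminal factor $|(\sG_B u)_{s_k+\tilde t}(z_k)(\sG_B u)_{s_k+\tilde t}(z_k')|$ is bounded below by $g_t^2$ via Remark \ref{lower-bound-frac-int-rep}, and the remaining spatial integral splits into a product of $k$ identical correlation integrals $\int\int_{B(x,r)^2}|z-z'|^{-\gamma}\,dz\,dz'$. Since $\gamma<d$ the singularity of $f$ is locally integrable, and a scaling argument gives each such integral $\ge c\,r^{2d-\gamma}\asymp (t/k)^{(2d-\gamma)\beta/\alpha}$. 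Multiplying the per-level factors $(t/k)^{(2d-\gamma)\beta/\alpha}$ (correlation), $(t/k)^{-2\beta d/\alpha}$ (the two kernels at $\theta_i\asymp t/k$), and $(t/k)$ (the length of the $\theta_i$-interval) yields the exponent $(2d-\gamma)\beta/\alpha-2\beta d/\alpha+1=(\alpha-\gamma\beta)/\alpha$ at each of the $k$ levels, hence the claimed power $(t/k)^{k(\alpha-\gamma\beta)/\alpha}$; all the dimensional and $2^{\pm}$ constants collect into a single $k$-independent $c_1$, while the factor $(\lambda l_\sigma)^{2k}$ is carried along unchanged.

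The step I expect to be the real obstacle is precisely the $k$-independence of $c_1$. The correlation $f(z_i,z_i')=|z_i-z_i'|^{-\gamma}$ rewards $z_i$ and $z_i'$ staying close, yet the recursion couples $z_i$ to $z_{i-1}$ and $z_i'$ to $z_{i-1}'$ through separate kernels; a naive triangle-inequality propagation of the spread $|z_i-z_i'|$ would accumulate the restriction radii and force the effective lower bound on $f$ to degrade like $k^{-\gamma}$ per step, i.e.\ a fatal $k^{-\gamma k}$. Confining all $2k$ spatial points to one common ball of radius shrinking like $(t/k)^{\beta/\alpha}$ is what keeps every kernel bound and every correlation integral on the same scale and so keeps $c_1$ uniform in $k$. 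A minor residual technicality is that $B(x,r)$ may poke out of $B(0,R-\epsilon)$ when $x$ lies near its boundary; this is absorbed by working inside the slightly larger interior ball $B(0,R-\epsilon/2)$ and using the convexity of the ball to ensure $|B(x,r)\cap B(0,R-\epsilon/2)|\gtrsim r^d$, which affects only constants. Once the proposition is in hand, the resulting series is exactly of the form treated by Lemma \ref{lower-bound-lambda}, which will then deliver the excitation-index lower bound needed for Theorem \ref{excitation-colored}.
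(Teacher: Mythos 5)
Your proposal is correct and follows essentially the same route as the paper: both start from the recursion \eqref{recursion}, bound the terminal factor by $g_t^2$ via Remark \ref{lower-bound-frac-int-rep}, restrict the time gaps to scale $t/k$, confine all spatial variables to a common ball centered at $x$ of radius comparable to $(\text{gap})^{\beta/\alpha}$ so that Proposition \ref{density-lower-bound-killed-fractional} applies at every level and the correlation $f$ stays at scale $(t/k)^{-\gamma\beta/\alpha}$, and then do the same per-level power count yielding $(t/k)^{k(\alpha-\gamma\beta)/\alpha}$. The only differences are bookkeeping: the paper uses the intersected balls $B(x,s_1^{\beta/\alpha}/2)\cap B(z_{i-1},s_i^{\beta/\alpha})$ with a pointwise bound $f\geq s_1^{-\gamma\beta/\alpha}$ and an explicit $\int_0^{t/k}s_1^{k-1-k\gamma\beta/\alpha}\,\d s_1$, whereas you pin the gaps to $[t/(2k),t/k]$ and bound the correlation integral by scaling, $\int\int_{B(x,r)^2}|z-z'|^{-\gamma}\,\d z\,\d z'\geq c\,r^{2d-\gamma}$, which is an equivalent computation.
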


\begin{proof}
We will look at the following term which comes from the recursive relation described above,
\begin{equation*}
\begin{aligned}
\sum_{k=1}^\infty& (\lambda l_\sigma)^{2k}\int_0^t\int_{B^2}\int_0^{s_1}\int_{B^2}\cdots \int_0^{s_{k-1}}\int_{B^2} |(\sG_B u)_{s_k+\tilde{t}}(z_k)(\sG_B u)_{s_k+\tilde{t}}(z_k')|\\
&\prod_{i=1}^kG_B(s_{i-1}-s_{i}, z_{i-1},\,z_i)G_B(s_{i-1}-s_{i}, z'_{i-1},\,z'_i)f(z_i, z_i') \d z_i\d z_i'\d s_i.
\end{aligned}
\end{equation*}
Using the fact from Remark \ref{lower-bound-frac-int-rep} that for $z_k, z_k' \in B_\epsilon$
\begin{equation}
\begin{split}
(\sG_B u)_{s_k+\tilde{t}}(z_k)(\sG_B u)_{s_k+\tilde{t}}(z_k')& \\
&\geq \inf _{x,y\in B_\epsilon}\inf_{0\leq s\leq t} (\sG_B u)_{s+\tilde{t}}(x)(\sG_B u)_{s+\tilde{t}}(y)\\
&=g_t^2,
\end{split}
\end{equation}
we obtain
\begin{equation*}
\begin{aligned}
\E|u_{t+\tilde{t}}(x)|^2\geq& g_t^2+g_t^2\sum_{k=1}^\infty (\lambda l_\sigma)^{2k}\int_{t-t/k}^t\int_{B_\epsilon^2}\int_{s_1-t/k}^{s_1}\int_{B_\epsilon^2}\cdots \int_{s_{k-1}-t/k}^{s_{k-1}}\int_{B_\epsilon^2} \\
&\prod_{i=1}^kG_B(s_{i-1}-s_{i}, z_{i-1},\,z_i)G_B(s_{i-1}-s_{i}, z'_{i-1},\,z'_i)f(z_i, z_i') \d z_i\d z_i'\d s_i.
\end{aligned}
\end{equation*}
We now make a substitution and reduce the temporal region of integration to write
\begin{equation*}
\begin{aligned}
g_t^2\sum_{k=1}^\infty& (\lambda l_\sigma)^{2k}\int_0^{t/k}\int_{B_\epsilon^2}\int_0^{t/k}\int_{B_\epsilon^2}\cdots \int_0^{t/k}\int_{B_\epsilon^2} \\
&\prod_{i=1}^kG_B(s_{i}, z_{i-1},\,z_i)G_B(s_{i}, z'_{i-1},\,z'_i)f(z_i, z_i') \d z_i\d z_i'\d s_i.
\end{aligned}
\end{equation*}

We will further reduce the domain of integration so the function
\begin{equation*}
\prod_{i=1}^kG_B(s_{i}, z_{i-1},\,z_i)G_B(s_{i}, z'_{i-1},\,z'_i)f(z_i, z_i'),
\end{equation*}
has the required lower bound.  For $i=0,\cdots, k$, we set

\begin{equation*}
z_i\in B(x,\,s_1^{\beta/\alpha}/2)\cap B(z_{i-1},\,s_i^{\beta/\alpha})
\end{equation*}
and
\begin{equation*}
z'_i\in B(x,\,s_1^{\beta/\alpha}/2)\cap B(z'_{i-1},\,s_i^{\beta/\alpha}).
\end{equation*}
We therefore have $|z_i-z'_i|\leq s_1^{\beta/\alpha}$, $|z_i-z_{i-1}|\leq s_i^{\beta/\alpha}$ and $|z'_i-z'_{i-1}|\leq s_i^{\beta/\alpha}$. We use the lower bound on the heat kernel  from Lemma \ref{density-lower-bound-killed-fractional}
\begin{equation*}
\begin{aligned}
\prod_{i=1}^k&G_B(s_{i}, z_{i-1},\,z_i)G_B(s_{i}, z'_{i-1},\,z'_i)f(z_i, z_i')\\
&\geq \frac{c^k}{s_1^{k\gamma\beta/\alpha}}\prod_{i=1}^k\frac{1}{s_i^{2\beta d/\alpha}},
\end{aligned}
\end{equation*}
for some $c>0$.
We set $\sA_i:=B(x,\,s_1^{\beta/\alpha}/2)\cap B(z_{i-1},\,s_i^{\beta/\alpha})$ and $\sA_i':=B(x,\,s_1^{\beta/\alpha}/2)\cap B(z'_{i-1},\,s_i^{\beta/\alpha})$.
We will further choose that $s_i^{\beta/\alpha}\leq \frac{s_1^{\beta/\alpha}}{2}$ and note that $|\sA_i|\geq c_1s_i^{d\beta/\alpha}$ and $|\sA'_i|\geq c_1s_i^{d\beta/\alpha}$. We therefore have

\begin{equation*}
\begin{aligned}
g_t^2\sum_{k=1}^\infty& (\lambda l_\sigma)^{2k}\int_0^{t/k}\int_{B_\epsilon^2}\int_0^{t/k}\int_{B_\epsilon^2}\cdots \int_0^{t/k}\int_{B_\epsilon^2} \\
 &\prod_{i=1}^kG_B(s_{i}, z_{i-1},\,z_i)G_B(s_{i}, z'_{i-1},\,z'_i)f(z_i, z_i') \d z_i\d z_i'\d s_i\\
 &\geq g_t^2\sum_{k=1}^\infty (\lambda l_\sigma)^{2k}\int_0^{t/k}\int_{\sA_1\times\sA'_1}\int_0^{t/k}\int_{\sA_2\times\sA'_2}\cdots \int_0^{t/k}\int_{\sA_k\times\sA'_k}\\
&\frac{1}{s_1^{k\gamma\beta/\alpha}}\prod_{i=1}^k\frac{1}{s_i^{2\beta d/\alpha}}\d z_i\d z_i'\d s_i\\
&\geq g_t^2\sum_{k=1}^\infty(\lambda l_\sigma c_2)^{2k} \int_0^{t/k}\frac{1}{s_1^{k\gamma \beta/\alpha}}s_1^{k-1}\d s_1\\
&\geq g_t^2\sum_{k=1}^\infty(\lambda l_\sigma c_3)^{2k} \left(\frac{t}{k}\right)^{k(1-\gamma\beta/\alpha)}.
\end{aligned}
\end{equation*}

This completes the proof of proposition.
\end{proof}

\begin{proposition}
For any fixed $\epsilon>0$, there exists a $t_0>0$ such that for all $0<t\leq t_0$,
$$
\liminf_{\lambda\to \infty} \frac{\log \log \mathcal{I}_{\epsilon, t}}{\log \lambda}\geq \frac{2\alpha}{\alpha-\beta\gamma}.
$$
\end{proposition}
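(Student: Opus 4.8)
The plan is to feed the explicit series lower bound of Proposition \ref{colorednoiselowbd} into the double-logarithm estimate of Lemma \ref{lower-bound-lambda}. First I would fix $\epsilon>0$ and take $t_0>0$ as in Proposition \ref{colorednoiselowbd}. Since the right-hand side of that proposition is independent of $x$, taking the infimum over $x\in B(0,R-\epsilon)$ and discarding the leading additive term $g_t^2$ gives, for every $0<t\le t_0$,
$$
\mathcal{I}_{\epsilon,\,t+\tilde t}(\lambda)\;\ge\; g_t^2\sum_{k=1}^\infty\bigl(\lambda^2 l_\sigma^2 c_1\bigr)^k\left(\frac{t}{k}\right)^{k(\alpha-\gamma\beta)/\alpha}.
$$
Here $g_t^2>0$ is the constant furnished by Remark \ref{lower-bound-frac-int-rep} and is independent of $\lambda$.

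Next I would recast the series in the precise form handled by Lemma \ref{lower-bound-lambda}. Setting $\rho:=(\alpha-\gamma\beta)/\alpha$, the general term factors as
$$
\bigl(\lambda^2 l_\sigma^2 c_1\bigr)^k\left(\frac{t}{k}\right)^{k\rho}=\frac{\bigl(\lambda^2 l_\sigma^2 c_1 t^{\rho}\bigr)^k}{k^{\rho k}}=\left(\frac{s}{k^{\rho}}\right)^k,\qquad s:=\lambda^2\, l_\sigma^2 c_1 t^{\rho},
$$
so the sum is exactly $S(s)$ in the notation of Lemma \ref{lower-bound-lambda}. Writing $s=\theta t_1$ with $\theta:=\lambda^2$ and $t_1:=l_\sigma^2 c_1 t^{\rho}>0$ a fixed constant, Lemma \ref{lower-bound-lambda} yields $\liminf_{\theta\to\infty}\frac{\log\log S(\theta t_1)}{\log\theta}\ge 1/\rho$. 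Because $\theta=\lambda^2$ forces $\log\theta=2\log\lambda$, this becomes
$$
\liminf_{\lambda\to\infty}\frac{\log\log S(\lambda^2 t_1)}{\log\lambda}\;\ge\;\frac{2}{\rho}\;=\;\frac{2\alpha}{\alpha-\gamma\beta}.
$$

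Finally I would transfer this back to $\mathcal{I}_{\epsilon,\,t+\tilde t}$. Since $g_t^2$ is a positive constant independent of $\lambda$, multiplying inside the inner logarithm only shifts $\log S(s)$ by the fixed amount $\log g_t^2$, which is asymptotically negligible against $\log S(s)\to\infty$; hence the iterated-logarithm ratio is unchanged in the limit and $\liminf_{\lambda\to\infty}\frac{\log\log \mathcal{I}_{\epsilon,\,t+\tilde t}(\lambda)}{\log\lambda}\ge \frac{2\alpha}{\alpha-\gamma\beta}$. As $\tilde t$ is a fixed constant, relabelling $t+\tilde t$ as the time variable (exactly the device already used at the end of the proof of Theorem \ref{main-thm-white-noise}) gives the stated conclusion for $\mathcal{I}_{\epsilon,t}$. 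I expect the genuine difficulty to lie entirely in Proposition \ref{colorednoiselowbd}; here the only delicate bookkeeping is extracting the factor $2$ from the substitution $\theta=\lambda^2$ (this is precisely what produces $2\alpha/(\alpha-\gamma\beta)$ rather than $\alpha/(\alpha-\gamma\beta)$), correctly matching $\rho$ to the exponent $(\alpha-\gamma\beta)/\alpha$, and verifying that neither the additive nor the multiplicative constant $g_t^2$ disturbs the double logarithm.
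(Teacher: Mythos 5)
Your proposal is correct and follows essentially the same route as the paper: both feed the series lower bound of Proposition \ref{colorednoiselowbd} into Lemma \ref{lower-bound-lambda} with $\rho=(\alpha-\gamma\beta)/\alpha$ and $\theta=\lambda^2$, the substitution that produces the factor $2$ in $2\alpha/(\alpha-\gamma\beta)$. Your version merely makes explicit some bookkeeping the paper leaves implicit (discarding the additive $g_t^2$, absorbing the multiplicative $g_t^2$ inside the double logarithm, and the $t+\tilde t$ relabelling), all of which is sound.
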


\begin{proof}
Since $t$ is strictly positive, we can use a substitution and rewrite the lower bound in Proposition \ref{colorednoiselowbd} as
\begin{equation}
\begin{split}
\sum_{k=1}^\infty(\lambda l_\sigma c_3)^{2k} \left(\frac{t}{k}\right)^{k(1-\gamma\beta/\alpha)}&\\
&=\sum_{k=1}^\infty \left( \frac{(\lambda l_\sigma c_3)^2 t^{(\alpha-\gamma\beta)/\alpha}}{k^{(\alpha-\gamma\beta)/\alpha}}\right)^k.
\end{split}
\end{equation}

Now from Lemma \ref{lower-bound-lambda}  with $\rho=(\alpha-\gamma\beta)/\alpha$ and $\theta=\lambda^2$ together with the previous proposition give the result.

\end{proof}

\begin{proof}[Proof of Theorem \ref{excitation-colored}]

The previous propositions prove the theorem for all $0<t\leq t_0$. Now we extend the result to all $t>0$. For any $T,t>0$,
\begin{equation*}
\begin{aligned}
\E|u_{t+T}&(x)|^2\\
&\geq|(\sG_B u)_{t+T}(x)|^2+\lambda^2 l_\sigma^2\int_0^{t+T}\int_{B^2}\\
& G_B(T+t-s_1,\,x,\,z_1)G_B(T+t-s_1,\,x,\,z_1')\E|u_{s_1}(z_1)u_{s_1}(z_1')|f(z_1,z_1')\d z_1\d z_1'\d s_1.
\end{aligned}
\end{equation*}

This leads to
\begin{equation*}
\begin{aligned}
\E|u_{t+T}&(x)|^2\\
&\geq|(\sG_B u)_{t+T}(x)|^2+\lambda^2 l_\sigma^2\int_0^{t}\int_{B^2}\\
& G_B(t-s_1,\,x,\,z_1)G_B(t-s_1,\,x,\,z_1')\E|u_{T+s_1}(z_1)u_{T+s_1}(z_1')|f(z_1,z_1')\d z_1\d z_1'\d s_1.
\end{aligned}
\end{equation*}
A similar argument used in the proof of Proposition \ref{colorednoiselowbd} shows that
\begin{equation*}
\begin{aligned}
\E|u_{t+T}&(x)|^2\\
\geq &(\sG_B u)_{t+T}(x)|^2\\
+&\sum_{k=1}^\infty (\lambda l_\sigma)^{2k}\int_0^t\int_{B^2}\int_0^{s_1}\int_{B^2}\cdots \int_0^{s_{k-1}}\int_{B^2} |(\sG_B u)_{T+s_k}(z_k)(\sG_B u)_{T+s_k}(z_k')|\\
&\prod_{i=1}^kG_B(s_{i-1}-s_{i}, z_{i-1},\,z_i)G_B(s_{i-1}-s_{i}, z'_{i-1},\,z'_i)f(z_i, z_i') \d z_i\d z_i'\d s_i.\end{aligned}
\end{equation*}
Similar ideas to those used in the proof of Proposition \ref{colorednoiselowbd} combined with  the proof of the proposition above show that for all $t\leq t_0$, we have

$$
\liminf_{\lambda\to \infty} \frac{\log \log \E|u_{T+t}(x)|^2}{\log \lambda}\geq \frac{2\alpha}{\alpha-\beta\gamma},
$$
for all $T>0$ and whenever  $x\in B(0, R-\epsilon)$.
\end{proof}
\begin{proof}[Proof of Corollary \ref{corollary-excitation-colored}.]
The proof of this corollary  is exactly as that of Corollary \ref{corollary-excitation} and it is omitted.
\end{proof}

\section{An  extension}
We can obtain results similar to the results in section 5 of \cite{foondun-tian-liu-2015}. We state one example, other examples can also be extended to  the time fractional case. We choose $\mathcal{L}$ to be the generator of the relativistic  stable process killed upon exiting the ball $B(0,R)$.
So we are looking at the following equation
\begin{equation}\label{tfspde-colored-relativistic}
\begin{split}
\partial^\beta_t u_t(x)&=m u_t(x)-(m^{2/\alpha}-\Delta)^{\alpha/2}u_t(x)+I_t^{1-\beta}[\lambda \sigma(u_t(x))\dot{F}(t,\,x)],\, x\in B(0,R),\\
u_t(x)&=0,\ \ \ x\in B(0,R)^c.
\end{split}
\end{equation}
Here $m$ is a positive number. It is known that for any $\epsilon>0$, there exists a $T_0>0$ such that for all $x,y\in B(0, R-\epsilon)$ and $t\leq T_0$ we have
$$
p(t,x,y)\approx  t^{-d/\alpha}
$$
whenever  $|x-y|\leq t^{1/\alpha}$. See, for instance, \cite{chen-kim-song-2012}. The constant involved in this inequality depends on $m$.  We therefore have the same conclusion as the one in Theorem  \ref{excitation-colored}. So we have for all $x\in B(0,R-\epsilon)$
\begin{equation*}
\lim_{\lambda\rightarrow \infty} \frac{\log \log \E|u_t(x)|^2}{\log \lambda}=\frac{2\alpha}{\alpha-\gamma\beta},
\end{equation*}
where $u_t$ is the unique solution to \eqref{tfspde-colored-relativistic}.

\end{document}